\newcommand{\F}{\mathcal F}
\newcommand{\A}{\hat{\mathcal A}}
\newcommand{\J}{\mathcal J}
\newcommand{\T}{\mathcal T}
\newcommand{\TT}{\mathbb T}
\newcommand{\HS}{\mathcal H}
\newcommand{\Prob}{\mathbb P}
\newcommand{\E}{\mathbb E}
\newtheorem{theorem}{Theorem}
\newtheorem{lemma}{Lemma}
\newtheorem{definition}{Definition}
\newtheorem{remark}{Remark}
\title{On Time-Varying Delayed Stochastic Differential Systems with Non-Markovian Switching Parameters
}
\author{
  Xinyu Wu \\
  Fudan University \\
  \texttt{xywu19@fudan.edu.cn} \\
   \And
  Zidong Wang \\
  Brunel University London \\
  \texttt{Zidong.Wang@brunel.ac.uk} \\
  \And
  Wenlian Lu \\
  Fudan University \\
  \texttt{wenlian@fudan.edu.cn} \\
}
\begin{document}
\maketitle

\begin{abstract}
This paper focuses on time-varying delayed stochastic differential systems with stochastically switching parameters formulated by a unified switching behavior combining a discrete adapted process and a Cox process. Unlike prior studies limited to stationary and ergodic switching scenarios, our research emphasizes non-Markovian, non-stationary, and non-ergodic cases. It arrives at more general results regarding stability analysis with a more rigorous methodology. The theoretical results are validated through numerical examples.
\end{abstract}

\keywords{Stochastic differential systems \and Switching parameters \and Time-varying delays \and Lyapunov functional \and Halanay inequality}

\section{Introduction}

Stochastic systems are often characterized through randomness (e.g.~noise typically represented as Brownian motion) and the stochastic dynamic environment composed of subsystems and defined switching behavior. This behavior, which identifies the active subsystem at each instant, is generally controlled by a continuous-time stochastic process \cite{Dav}. In situations involving Brownian motions, these stochastically evolving dynamics can be formulated as stochastic differential systems (SDSs), the solutions to which are referred to as the diffusion process. SDSs have been extensively researched in terms of the existence and uniqueness of the solution, as well as their dynamic properties like stability \cite{Koz, Mao, Sko, Protter, Oks}. Notably, these stochastically evolving systems can have variable structures due to random internal or external changes, leading to the emergence of the Markovian jump system. In this system, parameters switch between certain discrete modes, adhering to a Markov process.

To date, stochastic systems with Markovian jumping parameters have garnered considerable research attention, yielding significant results applicable to various fields like automatic control, signal processing, and fault detection \cite{CKH, CL, GengH-TSMCS-20, GengH-IS-17, LongY-IJSS-14, YuanC-Auto-04}. For instance, the over-the-horizon radar tracking problem was addressed in \cite{GengH-TSMCS-20} using a novel fast-update target and intermittent-update ionosphere model to capture the dynamic evolution of the target and ionosphere. The fault detection problem for nonlinear stochastic systems with mixed time-delays and Markovian switching was resolved in \cite{LongY-IJSS-14}. Despite numerous advancements in Markovian switching systems, there are limited studies on non-Markovian switching systems, especially where system stability is a major concern \cite{CL1}. This gap in knowledge serves as one of our motivations.

In addition to Markovian switching, Poisson switching has also sparked considerable research interest in recent decades due to its robust capability to depict certain stochastically evolving system dynamics. For instance, a switched Poisson process was used in \cite{BhatVN-JQRS-94} to model the point process whose rate change is governed by a random mechanism. The optimal control problem for stochastic dynamic systems involving both Poisson and Markovian switching was addressed in \cite{Antonyuk-JM-20}. Recently, there has been a growing interest in the synthesis/analysis of system dynamics combining Poisson switching and Brownian motions \cite{Bra, Mao2, YL, YZZ}. Also, due to unavoidable time delays in dynamic systems, resulting from limited communication efficiency in networked environments, (time-varying) delayed SDSs have received significant attention \cite{Mao1, Mao3, Moh, Mao4}. Notably, most of the examined stochastic processes (dictating the network topologies) have been confined to stationary and/or ergodic cases. Rarely have non-stationary and non-ergodic scenarios been explored, let alone cases accommodating time-delays. This forms the second motivation for this paper.

Pertaining to the above discussion, in this paper, we explore time-varying delayed stochastic differential systems with stochastically switching parameters (TVDSDS-SSPs), with the existence and uniqueness of the solution to the Cauchy problem derived via a similar framework as in \cite{Mao1}. Unlike the stability analysis in \cite{CKH, CL1} that ensures the stability of each subsystem via switching Lyapunov functions, our approach is based on defining stochastic Lyapunov functionals, whose conditional expectation decreases with the requirement of stable subsystems. Additionally, we extend the acclaimed Halanay inequality for time-varying delayed dynamics \cite{LLC} and present the stability theorems. For practical application, we derive sufficient conditions for the stability of a class of nonlinear stochastically switching systems. 
Notably, when the switching is limited to Markovian jumps, our derived conditions align with the existing results in literature.

The main contributions of this paper are as follows. 1) To the best of our knowledge, this paper represents one of the initial attempts to tackle time-varying delayed SDSs with non-Markovian switching parameters, which is rigorous within this framework, yet hasn't appeared in existing literature.
  The switching process in question is highly general and encompasses independent processes, Markovian jump processes, and hidden Markov processes as special cases. 2) A unique model of time-varying delayed stochastic switching systems is proposed. Here, the switching signal is depicted by a blend of a Cox's process and a discrete-time adaptive process. 3) We derive sufficient conditions to ensure the stability of the system under study using the Lyapunov functional and the Halanay inequality approaches.

\section{Problem Formulation and Preliminaries}\label{sec2}
Throughout this paper, we use the following notations.
Take $\mathbb{R}_+=[0,+\infty)$, and $\mathbb{Z}_+$ the set of positive integers. Let $(
\Omega,
\mathcal{F},
\mathbb{P}_1)$ and $(O,Z,\mathbb{P}_2)$ be complete probability spaces, and $(\widetilde{\Omega},\mathcal{H},\mathbb{P})$ the product probability space (i.e., $\widetilde{\Omega}=O\times\Omega$, $\mathcal{H}=Z\otimes\mathcal{F}$, and $\mathbb{P}=\mathbb{P}_2\otimes\mathbb{P}_1$). 

The switching signal $r(t)$ is defined in terms of two stochastic processes defined on $(
\Omega,
\mathcal{F},
\mathbb{P}_1)$, i.e., a discrete adapted process $\{\xi^{k}\}_{k\in\mathbb{Z}_+}$ with respect to the filtration $\{\F_k:k\in\mathbb{Z}_+\}$ which satisfies the usual conditions and a Cox process $\{t_{k}\}_{k\in\mathbb{Z}_+}$ with state space $\mathbb{R}_+$ (see \cite{Cox,new_cox} for more details).
Specifically, denote by $N_{[t_{1},t_{2})}$, $N_{[t_{1},t_{2}]}$, $N_{(t_{1},t_{2})}$ and $N_{(t_{1},t_{2}]}$ the counting numbers of the switches in $[t_{1},t_{2})$, $[t_{1},t_{2}]$, $(t_{1},t_{2})$ and $(t_{1},t_{2}]$, respectively.
Suppose that each event occurs independently of others. Setting $N(t)=N_{[0,t)}$ with $N(0)=1$, the event rate $\lambda(t)$ depending on the adapted process $\{\xi^{k}\}_{k\in\mathbb{Z}_+}$ is defined by $\lambda(t)=\mu(\xi^{N(t)})$ where $\mu(\cdot)$ is a positive measurable function with respect to $(\Omega,\F_{N(t)})$ satisfying $\mu(\xi)\leq\mu_{0}$ for some positive $\mu_0$ and all $\xi\in\Omega$.
This means that, given $\{\xi^{k}\}_{k\in\mathbb{Z}_+}$,
$\mathbb{P}_1(N(t,t+h]\geq 1|~\{\xi^{k}\}_{k\in\mathbb{Z}_+})=\mu(\xi^{N(t)})h+o(h)$,
$\mathbb{P}_1(N(t,t+h]\geq 2|~\{\xi^{k}\}_{k\in\mathbb{Z}_+})=o(h)$, as $h\rightarrow0$.
Note that the point process $N(t)$ is adapted to some filtration $\{\J_{t}:t\in\mathbb{R}_+\}$.
Thus, a right-continuous $r(t)$ can be explicitly defined as $r(t)=\xi^k$, $t\in[t_{k-1},t_k)$, $k\in\mathbb{Z}_+$. 

Let $\{W_t\}_{t\in\mathbb{R}_+}$ be a standard $m$-dimensional Brownian motion, which is independent of $r(t)$, defined on $(O,Z,\mathbb{P}_2)$ with natural filtration $\{Z_t:t\in\mathbb{R}_+\}$.
And thus $(W_{t},r(t))$ defined on $(\widetilde{\Omega},\mathcal{H},\mathbb{P})$ is a well-defined adapted process
 with respect to a joint filtration $\{\HS_t:t\in\mathbb{R}_+\}$, where $\HS_{t}=Z_{t}\otimes\J_{t}\otimes\F_{N_{t}}$ with $\HS_{0}$ including all null sets.

Let $\tau(t)$ be a positive real-valued function defined on $\mathbb{R}_+$ such that
1) ${\tau}_{*}:=\inf\{\tau(t):t\geq 0\}>0$;
2) $\tau_b:=\sup\{\tau(t)-t:t\geq 0\}>0$.
For simplicity of presentation, we suppose $\tau(t)\in C^1(\mathbb{R}_{+})$, which is possibly unbounded.
Let $C([-\tau_b, 0];\mathbb{R}^n)$ denote the family of continuous function $\phi$ from $[-\tau_b, 0]$ to $\mathbb{R}^n$ with the norm
$\Vert \phi\Vert=\sup_{-\tau_b\leq\theta\leq 0}|\phi(\theta)|$, where $|\cdot|$ is the Euclidean norm in $\mathbb{R}^n$.
Let $C^b_{\mathcal{H}_0}([-\tau_b,0];\mathbb{R}^n)$ denote the family of all bounded, $\mathcal{H}_0$-measurable,
$C([-\tau_b, 0];\mathbb{R}^n)$-valued random variables.

In this paper, we consider the following TVDSDS-SSPs:
\begin{align}
\mathrm{d}x(t)=&f(x(t),x(t-\tau(t)),r(t),t)\mathrm{d}t+g(x(t),x(t-\tau(t)),r(t),t)\mathrm{d}W_{t},\label{ds}
\end{align}
with the initial condition $x(\theta)=\phi(\theta)$, $\theta\in[-\tau_b,0]$ and $\phi\in C_{\HS_{0}}^{b}([-\tau_b,0])$ where $x(t)\in\mathbb R^{n}$ is the state vector, $W_{t}$, $r(t)$ and $\tau(t)$ are the Brownian motion, the switching signal and time-varying delay defined above. $f:\mathbb R^{n}\times\mathbb R^{n}\times\Omega\times\mathbb{R}_+\to\mathbb R^{n}$ and $g: \mathbb R^{n}\times\mathbb R^{n}\times\Omega\times\mathbb{R}_+\to \mathbb R^{n\times m}$ are nonlinear functions, for which we impose the following hypotheses.

$\mathbf H_{1}$: $f(x,y,r,t)$ and $g(x,y,r,t)$ satisfy {\em uniformly local Lipschitz} and {\em linearly growing} condition with respect to $x$ and $y$. That is, there exist positive constant sequence $h_{k}>0$ ($k=1,2,\cdots$) and $h>0$ independent of $\xi$ and $t$ such that
1) $|f(x,y,\xi,t)-f(x',y',\xi,t)|+|g(x,y,\xi,t)-g(x',y',\xi,t)|\le h_{k}(|x-x'|+|y-y'|)$ holds for any $\xi\in\Omega$, $t\ge 0$ and $x,y,x',y'\in\mathbb R^{n}$ with $\max\{|x|,|y|,|x'|,|y|'\}\le k$; 2) $|f(x,y,\xi,t)|+|g(x,y,\xi,t)|\le h(1+|x|+|y|)$ holds for all $\xi\in\Omega$, $t\ge 0$ and $x,y\in\mathbb R^{n}$.

To better characterize the stability of TVDSDS-SSPs, we present the following definitions.

\begin{definition}\label{def:mss}
The time-varying delayed stochastic dynamic system (\ref{ds}) is said to be \emph{mean-square stable} if $\mathbb{E}(|x(t)|^2)\rightarrow 0$, as $t\rightarrow +\infty$ for arbitrary initial condition $\phi\in C_{\mathcal{H}_0}^b([-\tau_b,0])$.
\end{definition}
\begin{remark}
Definition \ref{def:mss} is consistent with the system with Markovian jumping parameters in \cite{Costa}.
\end{remark}
\begin{definition}\label{def:nu-mss}
   Given positive $\nu(t)\in C^1([-\tau_b,+\infty))$ satisfying
   $\nu(t)\rightarrow+\infty$, as $t\rightarrow +\infty$,
   the time-varying delayed stochastic dynamic system (\ref{ds}) is said to be
\emph{$\nu$-mean-square stable} if for arbitrary initial condition $\phi\in C_{\mathcal{H}_0}^b([-\tau_b,0])$,
there exists constant $M>0$ such that $\mathbb{E}(|x(t)|^2)\leq {M}/{\nu(t)}$, as $t\geq 0$.
\end{definition}

\begin{definition}\label{def:as}
The time-varying delayed stochastic dynamic system (\ref{ds}) is said to be
\emph{stable in probability} if $\mathbb{P}(|x(t)|>\epsilon)\rightarrow 0$, as $t\rightarrow +\infty$ for arbitrary initial condition $\phi\in C_{\mathcal{H}_0}^b([-\tau_b,0])$ and any $\epsilon>0$.
\end{definition}

\begin{definition}\label{def:nu-as}
   Given positive $\nu(t)\in C^1([-\tau_b,+\infty))$ satisfying
   $\nu(t)\rightarrow+\infty$, as $t\rightarrow +\infty$,
   the time-varying delayed stochastic dynamic system (\ref{ds}) is said to be
\emph{$\nu$-stable in probability} if for arbitrary initial condition $\phi\in C_{\mathcal{H}_0}^b([-\tau_b,0])$,
there exists a positive random variable $\xi$ satisfying $\mathbb{P}(\xi>R)\rightarrow0$ as $R\rightarrow +\infty$,
such that $\mathbb{P}\left(|x(t)|^2\leq {\xi}/{\nu(t)}\right)\rightarrow1$ as $t\rightarrow +\infty$.
\end{definition}
\begin{remark}\label{lemma:nu-stability}
Definitions \ref{def:nu-mss} and \ref{def:nu-as} encompass a variety of convergence concepts based on the selection of $\nu(\cdot)$. Options such as $\nu(t)=\exp(\alpha t)$, $\nu(t)=(t+\tau_b+1)^{\alpha}$ for some $\alpha>0$, and $\nu(t)=\ln(t+\tau_b+1)$ (or $\nu(t)=\ln\ln(t+\tau_b+3)$) result in exponential stability, power stability, Log stability (or Log-Log stability), respectively \cite{ChenT}.
\end{remark}

\section{Solution of Cauchy Problem and Generalized Dynkin's formula}

Under Hypothesis $\mathbf H_{1}$, the existence and uniqueness of the solution to (\ref{ds}) are established by the following theorem.

\begin{theorem}\label{ext}
Assuming that Hypothesis $\mathbf H_{1}$ is valid and $\tau(t)$ and $r(t)$ are defined as in Section \ref{sec2}, with the event rate function $\mu(\cdot)$ measurable and satisfying $\sup_{\xi\in\Omega}\mu(\xi)\le\mu_{0}$ for a positive $\mu_{0}$, equation (\ref{ds}) with the initial function $\phi\in C_{\HS_{0}}^{b}([-\tau_b,0])$ possesses a unique continuous $\HS_{t}$-adapted solution where $\E(\sup_{-\tau_b\le s\le t}|x(s)|^{p})<+\infty$ holds for any $t$ and any $p\ge 1$.
\end{theorem}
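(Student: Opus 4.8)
The plan is to follow the by-now standard localization-plus-a-priori-estimate route for stochastic delay differential equations (as in \cite{Mao1, Mao}), treating the switching signal $r(t)$ as an exogenous, right-continuous, piecewise-constant, $\HS_t$-adapted input. The first observation is that, by construction, for $\Prob_1$-almost every switching realization $t\mapsto r(t)$ is a step function taking values among the $\xi^k\in\Omega$, and the standing bound $\sup_{\xi\in\Omega}\mu(\xi)\le\mu_0$ forces the counting process $N(t)$ to be stochastically dominated by a Poisson process of rate $\mu_0$, so the switching times $\{t_k\}$ do not accumulate on any finite interval. Hence the coefficients $\tilde f(x,y,t):=f(x,y,r(t),t)$ and $\tilde g(x,y,t):=g(x,y,r(t),t)$ are well-defined $\HS_t$-adapted random fields which, because the constants $h_k$ and $h$ in Hypothesis $\mathbf H_1$ are independent of $\xi$, enjoy the uniformly local Lipschitz and linear growth properties with the \emph{same} constants; thus (\ref{ds}) is recast as a classical It\^o delay equation driven by $W_t$ with random, time-dependent coefficients.

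Next I would run the truncation. With the radial cutoff $\pi_k(x):=x\,(|x|\wedge k)/|x|$, set $f_k(x,y,\xi,t):=f(\pi_k(x),\pi_k(y),\xi,t)$ and likewise $g_k$; by $\mathbf H_1$(1) these are globally Lipschitz in $(x,y)$ uniformly in $(\xi,t)$. Picard iteration on finite horizons (equivalently, successive resolution on the non-accumulating intervals $[t_{k-1},t_k)$, on each of which $r$ is frozen) gives, for every $k$, a unique continuous $\HS_t$-adapted solution $x_k$ of the truncated equation on $[-\tau_b,+\infty)$ with all moments finite; note that on $[0,\tau_*]$ one has $t-\tau(t)\le t-\tau_*\le0$, so the delayed term initially involves only the given initial segment, after which the contraction argument proceeds as usual. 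Defining the stopping times $\sigma_k:=\inf\{t\ge0:|x_k(t)|\ge k\}$, uniqueness of the truncated equations yields $x_k\equiv x_{k+1}$ on $[-\tau_b,\sigma_k]$ and $\sigma_k$ nondecreasing, so $x(t):=x_k(t)$ for $t\le\sigma_k$ is a well-defined $\HS_t$-adapted solution of (\ref{ds}) up to the explosion time $\sigma_\infty:=\lim_k\sigma_k$; uniqueness for (\ref{ds}) follows from that of each truncation.

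The non-explosion step simultaneously produces the asserted moment bound. Applying It\^o's formula to $|x(t\wedge\sigma_k)|^p$, bounding the drift and diffusion by the linear growth estimate $\mathbf H_1$(2), using the Burkholder--Davis--Gundy inequality to dominate $\E\sup_{s\le t\wedge\sigma_k}$ of the stochastic integral (for $p\ge2$, the case $1\le p<2$ following by H\"older's inequality) and Young's inequality to absorb the resulting sup-norm on the right, and controlling $|x(t-\tau(t))|^p$ by $\sup_{-\tau_b\le u\le t}|x(u)|^p$ (here the bound $\tau(t)-t\le\tau_b$ enters), one obtains
\begin{align}
\E\Big(\sup_{-\tau_b\le s\le t\wedge\sigma_k}|x(s)|^p\Big)\le C_1+C_2\int_0^t\E\Big(\sup_{-\tau_b\le u\le s\wedge\sigma_k}|x(u)|^p\Big)\,\mathrm{d}s,
\end{align}
with $C_1,C_2$ depending on $p,h,\tau_b,t$ but not on $k$ nor on the switching realization. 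Gronwall's inequality gives a $k$-independent bound $\E(\sup_{-\tau_b\le s\le t\wedge\sigma_k}|x(s)|^p)\le C(p,t)$; Chebyshev's inequality then yields $\Prob(\sigma_k\le t)\le C(p,t)/k^p\to0$, so $\sigma_\infty=+\infty$ almost surely, and Fatou's lemma upgrades the bound to $\E(\sup_{-\tau_b\le s\le t}|x(s)|^p)<+\infty$ for every $t$ and every $p\ge1$.

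The main obstacle is not any single estimate but the measurability bookkeeping inherent to the non-Markovian switching: one must verify that the solution constructed above is adapted to the \emph{joint} filtration $\HS_t=Z_t\otimes\J_t\otimes\F_{N_t}$ and not to one that anticipates the Cox process, and that freezing the full switching $\sigma$-algebra genuinely reduces (\ref{ds}) to a classical stochastic delay equation on $(O,Z,\Prob_2)$ to which the cited theory applies $\Prob_1$-pathwise. This is delicate because the rate $\lambda(t)=\mu(\xi^{N(t)})$ couples the jump mechanism to the adapted sequence $\{\xi^k\}$; the clean way is to first fix a regular version of $r(\cdot)$, check that $N(t)$ is $\J_t$-adapted and non-accumulating, then run the pathwise delay-equation theory and reassemble via Fubini. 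Once the uniform-in-$\xi$ constants of $\mathbf H_1$ and the inequality $\tau(t)-t\le\tau_b$ are in hand, the remaining delay-specific computations are routine.
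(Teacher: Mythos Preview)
Your proposal is correct and, at the high level, shares with the paper the crucial opening move: the bound $\mu(\xi)\le\mu_0$ forces the Cox process to be dominated by a Poisson$(\mu_0)$ process, so on any finite horizon there are only finitely many switches and $r(\cdot)$ is a piecewise-constant adapted step function. From there the two arguments diverge in emphasis. The paper freezes $r$ on each switching interval $[t_{k-1},t_k)$ and then exploits the standing assumption $\tau_*=\inf_t\tau(t)>0$ to run the \emph{method of steps}: on $[0,\tau_*\wedge t_1]$ the delayed argument lies in $[-\tau_b,0]$, so the equation reduces to an ordinary (non-delay) SDE with known forcing $\phi(t-\tau(t))$; one iterates on $[\tau_*\wedge t_1,2\tau_*\wedge t_1]$, etc., citing \cite{Mao1,Mao2} both for each local step and for the $p$th-moment bound. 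Your route instead keeps the delay structure intact and handles the merely local Lipschitz condition by radial truncation, obtaining non-explosion and the moment bound directly via It\^o--BDG--Gronwall. This is more self-contained (you actually prove the moment estimate rather than outsource it) and would survive even without $\tau_*>0$; the paper's stepping argument is shorter but leans on the references and on that positive minimum delay. Your closing remarks on $\HS_t$-adaptedness go beyond what the paper spells out and are a legitimate concern, though the paper sidesteps it by working pathwise in the switching realization and appealing to the standard theory on each frozen piece.
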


\begin{proof}
Note that the event rate of the concerned switching process satisfies $\lambda(t)\le\mu_{0}$ for all $t$ with probability one. Then, in any finite time $[0,t]$, only a finite number of switches occur at $t_{1},\cdots,t_{N}$.
We first consider the time-varying delayed stochastic dynamical system (\ref{ds}) on $[0,t_1]$ as follows:
\begin{align*}
   \mathrm{d} x(t)=&f(x(t),x(t-\tau(t)),r(0),t)\mathrm{d}t+g(x(t),x(t-\tau(t)),r(0),t)\mathrm{d}W_{t},\quad t\in[0,t_1],
\end{align*}
with initial data $\phi\in C_{\mathcal{H}_0}^b([-\tau_b,0];\mathbb{R}^n)$.
Since $\tau(t)\geq\tau_b^*$,
\begin{align*}
   \mathrm{d} x(t)=&f(x(t),\phi(t-\tau(t)),r(0),t)\mathrm{d}t+g(x(t),\phi(t-\tau(t)),r(0),t)\mathrm{d}W_{t},\quad t\in[0,\tau_b^*\land t_1],
\end{align*}
where $\land$ gives the smaller between two numbers. By the existence and uniqueness theorem of stochastic differential delay equations \cite{Mao2},
under Hypothesis $\mathbf H_{1}$, there exists a unique solution.
Repeating such a process on $[\tau_b^*\land t_1,2\tau_b^*\land t_1]$, $[2\tau_b^*\land t_1,3\tau_b^*\land t_1]$, etc., we can see that the solution is unique on $[-\tau_b,t_1]$, and
$\mathbb{E}(\sup_{-\tau_b\leq t\leq t_1}|x(t)|^2)<+\infty$.
Following the same proof line in \cite{Mao1}, for almost every sequence $r(t)$, for any $t$ and any $p\geq 1$, a unique continuous solution exists in $[0, t]$, and
$\mathbb{E}(\sup_{-\tau_b\leq s\leq t}|x(s)|^p)<+\infty$.
\end{proof}

In what follows, we study the infinitesimal generator of the Lyapunov functional. For a real-valued bounded $\HS_{t}$-adapted process $f_{t}$ and $s\ge 0$, define a semigroup of conditioned shifts as $\T(s)f_{t}=\E(f(t+s)|\HS_{t})$.
Then, the semigroup is generated by
$\T(u)\T(s)f_t=\E(f(t+s+u)|\HS_{t})$.
It can be seen that $\T(s)$ is a contraction semigroup with the norm $
\|f\|=\sup_{t\ge 0}\E(|f(t)|)$,
which comprises a process norm space $\mathcal L$. Let $\mathcal L_{0}$ be the subspace of $\mathcal L$ in which $\T(s)$ is $p$-right continuous. Then, in $\mathcal L_{0}$, a {\em point-wise} infinitesimal generator is defined as
\begin{align*}
\A f=p-\lim_{h\to 0+}\frac{1}{h}(\T(h)f-f),
\end{align*}
where $p-\lim_{h\to 0+}$ means point-wise limit. If this limit is finite in the subspace $D(\A)\subset\mathcal L_{0}$, $\A$ is called to be well-defined, and $D(\A)$ is the definition region of $\A$. Similar to what has been done in \cite{Dynkin}, we have the following result.
\begin{lemma}(Generalized Dynkin's formula \cite{Dynkin,MT}) \label{thm3} For each right-continuous $f\in D(\A)$ and a stopping time $\tau\ge t$ that almost surely satisfy $\sup_{t\le\tau}|f(t)-f(0)-\int_{0}^{t}\A f(s)\mathrm{d}s|<+\infty$, we have
\begin{align}
\E(f(\tau)|\HS_{t})-f(t)=\E(\int_{t}^{\tau}\A f(s)\mathrm{d}s|\HS_{t}).\label{DK}
\end{align}
\end{lemma}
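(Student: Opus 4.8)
The plan is to reduce the statement to the classical Dynkin argument: show that $M(t):=f(t)-f(0)-\int_{0}^{t}\A f(s)\,\mathrm ds$ defines an $\HS_{t}$-martingale, and then apply the optional sampling theorem at $\tau$, with the stated supremum hypothesis furnishing exactly the domination needed to push a localization to the limit. The starting point is a ``fundamental theorem of calculus'' for the contraction semigroup $\T(\cdot)$ on $\mathcal L_{0}$: for $f\in D(\A)$, using $\T(u)\T(s)=\T(u+s)$ and the definition of the point-wise generator, one verifies that $u\mapsto\T(u)f$ is point-wise differentiable with $\tfrac{\mathrm d}{\mathrm du}\T(u)f=\T(u)\A f=\A\T(u)f$, and integrating over $[0,h]$ yields
\begin{align*}
\T(h)f-f=\int_{0}^{h}\T(u)\A f\,\mathrm du .
\end{align*}
Because $\A f\in\mathcal L$ we have $\E|\A f(s)|\le\|\A f\|<+\infty$ uniformly in $s$, so every time-integral below converges absolutely and Fubini's theorem applies freely.

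For the martingale property, fix $s\le t$. Since $f$ is $\HS_{t}$-adapted, $\int_{0}^{s}\A f(u)\,\mathrm du$ is $\HS_{s}$-measurable, so it suffices to show $\E(f(t)\mid\HS_{s})-f(s)=\E\bigl(\int_{s}^{t}\A f(u)\,\mathrm du\mid\HS_{s}\bigr)$. By the definition of the semigroup of conditioned shifts, $\E(f(t)\mid\HS_{s})=\T(t-s)f_{s}$; by Fubini and the tower property, $\E\bigl(\int_{s}^{t}\A f(u)\,\mathrm du\mid\HS_{s}\bigr)=\int_{s}^{t}\T(u-s)(\A f)_{s}\,\mathrm du=\int_{0}^{t-s}\T(v)(\A f)_{s}\,\mathrm dv$. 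Comparing with the identity of the first paragraph (at time $s$, with $h=t-s$) gives equality, hence $\E(M(t)\mid\HS_{s})=M(s)$; moreover $M(t)\in L^{1}$ for each $t$ since $f\in\mathcal L$ and $\E|\int_{0}^{t}\A f(u)\,\mathrm du|\le t\|\A f\|$, so $M$ is a genuine $\HS_{t}$-martingale.

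To reach the stopping time $\tau$, I would localize. For $n\in\mathbb Z_{+}$ put $\tau_{n}=\tau\wedge\inf\{u\ge 0:|M(u)|\ge n\}$; then $\{M(u\wedge\tau_{n})\}_{u\ge0}$ is a bounded martingale, so Doob's optional sampling theorem gives $\E(M(\tau_{n})\mid\HS_{t})=M(t\wedge\tau_{n})$. By hypothesis $\Xi:=\sup_{u\le\tau}|M(u)|<+\infty$ almost surely, so for a.e.\ $\omega$ one has $\inf\{u:|M(u)|\ge n\}>\tau$, hence $\tau_{n}=\tau\ge t$, for all large $n$; thus $M(t\wedge\tau_{n})=M(t)$ eventually and, by right-continuity of $f$ (and of $M$), $M(\tau_{n})\to M(\tau)$. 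Since $|M(\tau_{n})|\le\Xi$, passing to the limit yields $\E(M(\tau)\mid\HS_{t})=M(t)$; rearranging and cancelling the common term $f(0)+\int_{0}^{t}\A f(s)\,\mathrm ds$ gives precisely \eqref{DK}.

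The step I expect to be the main obstacle is the semigroup identity of the first paragraph: here $\T(s)$ acts on equivalence classes of adapted processes under the process norm $\|f\|=\sup_{t}\E|f(t)|$ and the generator is defined only through point-wise limits, so one must check with care that $\T(u)$ maps $D(\A)$ into itself, that $\A$ commutes with $\T(u)$ on that domain, and that $u\mapsto\T(u)f$ is regular enough (point-wise differentiable, $p$-right continuous) for the integral identity to be valid — this is the place where the Markov-process arguments of \cite{Dynkin,MT} must be transcribed to the present setting. The only other delicate point is the limit in the localization: the natural dominating variable $\Xi$ is supplied only as being a.s.\ finite, so to justify passing the conditional expectation through the limit one effectively needs $\E(\Xi\mid\HS_{t})<+\infty$ a.s.\ (equivalently $\E\,\Xi<+\infty$), a mild strengthening implicit in how the hypothesis is meant to be used.
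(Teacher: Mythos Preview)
The paper does not supply its own proof of this lemma: it is stated as a cited result from \cite{Dynkin,MT}, with the remark ``Similar to what has been done in \cite{Dynkin}, we have the following result.'' Your proposal is therefore not competing with an in-paper argument but rather reconstructing the classical one from the references, and the route you take---show $M(t)=f(t)-f(0)-\int_0^t\A f(s)\,\mathrm ds$ is an $\HS_t$-martingale via the semigroup identity $\T(h)f-f=\int_0^h\T(u)\A f\,\mathrm du$, then localize and apply optional sampling---is exactly the standard Dynkin/Meyn--Tweedie machinery the paper is invoking.

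Your self-identified obstacles are accurate. The semigroup regularity (that $\T(u)$ preserves $D(\A)$, commutes with $\A$ there, and that $u\mapsto\T(u)f$ is point-wise differentiable) is precisely the content one imports from \cite{Dynkin} in this non-Markovian setting, and the paper clearly intends it to be taken on faith from those references rather than re-derived. Your second caveat is also well placed: the hypothesis as stated in the lemma gives only almost-sure finiteness of $\Xi=\sup_{u\le\tau}|M(u)|$, which is enough to make the localizing sequence eventually equal to $\tau$ but not, by itself, enough to pass the conditional expectation through the limit; one does need $\E\,\Xi<+\infty$ (or uniform integrability of $\{M(\tau_n)\}$) for conditional dominated convergence. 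In the paper's applications (Theorems~\ref{LST} and~\ref{Hal}) the formula is only ever used with deterministic finite times, where this issue does not arise, so the imprecision in the lemma's hypotheses is harmless in context---but you are right that the statement, read literally, is slightly underspecified.
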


Let $x_{t}(\theta)=x(t+\theta)$ with $\theta\in[-\tau_b,0]$ and select a candidate Lyapunov functional as follows:
\begin{align}
V(x_{t},t,r(t))=P(x(t),t,r(t))+\int_{t-\tau(t)}^{t}Q(x(\theta),\theta)\mathrm{d}\theta,\label{V}
\end{align}
which belongs to $\mathcal D(\A)$. Then, the formula (\ref{DK}) still holds. In order to derive the infinitesimal generator of $V(x_{t},t,r(t))$ given by (\ref{V}), the following lemma is needed.
\begin{lemma}\label{lem1}
Consider the switching signal process $r(t)$ described in Section \ref{sec2} and let $k_{*}=N(t)$ and $r(t)=\xi^{k_{*}}$.
For an infinitesimal time interval $h$ and any bounded measurable function $q(\cdot):\Omega\to\mathbb R^{d}$ where the integer $d>0$, we have
\begin{align}
&\E\left[q(r(t+h))|\mathcal{H}_{t}\right]-q(r(t))
=h\mu(\xi^{k_{*}})\left\{\E\left(q(\xi^{k_{*}+1})|\F_{k_{*}}\right)-q(\xi^{k_{*}})\right\}+o(h),\label{cox}
\end{align}
as $h\rightarrow 0+$.
\end{lemma}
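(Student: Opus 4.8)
The plan is to pass to progressively coarser conditional expectations, exploiting the Cox-modulated structure of $r(t)$. Throughout, work on the event $\{N(t)=k_{*}\}$, on which $r(t)=\xi^{k_{*}}$ and the switching intensity at time $t$ equals $\mu(\xi^{k_{*}})$. First I would condition, in addition to $\HS_{t}$, on the whole discrete sequence $\{\xi^{k}\}_{k\in\mathbb Z_+}$; writing $\mathcal{G}_{t}=\HS_{t}\vee\sigma(\{\xi^{k}\}_{k})$, we have $\HS_{t}\subseteq\mathcal{G}_{t}$, so the tower property will apply. Given $\mathcal{G}_{t}$, on $(t,t+h]$ precisely one of the following occurs: no switch, with conditional probability $1-\mu(\xi^{k_{*}})h+o(h)$, whence $r(t+h)=\xi^{k_{*}}$; exactly one switch, with conditional probability $\mu(\xi^{k_{*}})h+o(h)$, whence $r(t+h)=\xi^{k_{*}+1}$; or at least two switches, with conditional probability $o(h)$. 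These are exactly the defining intensity relations $\mathbb P_1(N(t,t+h]\ge1\mid\{\xi^{k}\})=\mu(\xi^{N(t)})h+o(h)$ and $\mathbb P_1(N(t,t+h]\ge2\mid\{\xi^{k}\})=o(h)$ of the Cox process (the ``$o(h)$'' characterisation being inherently a statement about conditioning on the past of $N$), and since $\mu\le\mu_0$ the $o(h)$'s are uniform in $\omega$.

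Since $q$ is bounded, the contribution of the event $\{N(t,t+h]\ge2\}$ to $\E[q(r(t+h))\mid\mathcal{G}_{t}]$ is at most $\|q\|_{\infty}$ times an $\omega$-free $o(h)$; collecting the three cases,
$$\E\big[q(r(t+h))\,\big|\,\mathcal{G}_{t}\big]=q(\xi^{k_{*}})+h\,\mu(\xi^{k_{*}})\big(q(\xi^{k_{*}+1})-q(\xi^{k_{*}})\big)+o(h),$$
with the remainder bounded by a fixed multiple of $\|q\|_{\infty}$ times an $\omega$-independent $o(h)$.

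Next I would apply $\E[\,\cdot\mid\HS_{t}]$ and use the tower property. The quantities $q(\xi^{k_{*}})$ and $\mu(\xi^{k_{*}})$ are $\F_{k_{*}}=\F_{N_{t}}$-measurable, hence $\HS_{t}$-measurable, and come out of the conditional expectation; the uniform remainder stays $o(h)$. What is left is to identify $\E[q(\xi^{k_{*}+1})\mid\HS_{t}]$ with $\E[q(\xi^{k_{*}+1})\mid\F_{k_{*}}]$ on $\{N_{t}=k_{*}\}$. On that event $\HS_{t}$ is generated by $\F_{k_{*}}$, by the Brownian history $Z_{t}$ (independent of the switching mechanism, in particular of $\xi^{k_{*}+1}$), and by the point-process history $\J_{t}$, whose content beyond $\{N_{t}=k_{*}\}$ is the elapsed switch instants $t_{1},\dots,t_{k_{*}-1}$ --- already encoded in $\F_{k_{*}}$ --- together with the event $\{t_{k_{*}}>t\}$; by the conditional independence inherent in the construction (given $\F_{k_{*}}$, the next mode $\xi^{k_{*}+1}$ is drawn from a transition kernel on $\F_{k_{*}}$ independently of the rate-$\mu(\xi^{k_{*}})$ holding clock that determines $t_{k_{*}}$), conditioning on $\{t_{k_{*}}>t\}$ does not change the law of $\xi^{k_{*}+1}$. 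Substituting back and subtracting $q(r(t))=q(\xi^{k_{*}})$ yields (\ref{cox}).

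The principal obstacle is this last reduction, from $\HS_{t}$ down to $\F_{k_{*}}$: it is where one must argue carefully about how the product space $(\widetilde\Omega,\HS,\mathbb P)$ and the Cox mechanism interlock, ensuring that the random past switching times carry no information about the upcoming mode $\xi^{k_{*}+1}$ beyond what $\F_{k_{*}}$ already contains. Maintaining the $o(h)$ estimates uniformly in $\omega$ so that they survive two successive conditional expectations is a second, but entirely routine, point, handled by the boundedness of $q$ and the bound $\mu\le\mu_0$.
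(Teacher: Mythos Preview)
Your proposal is correct and follows essentially the same route as the paper: condition first on the full discrete sequence $\{\xi^{k}\}$, use the Cox intensity relations to expand $\E[q(r(t+h))\mid\cdot]$ to order $h$, and then project down to $\HS_{t}$ via the tower property. The paper's proof is terser---it simply writes $\E[q(r(t+h))\mid\{\xi^{k}\}]$ and then asserts that a further conditional expectation yields $\E[q(\xi^{k_{*}+1})\mid\F_{k_{*}}]$ in place of $q(\xi^{k_{*}+1})$---whereas you spell out both the uniformity of the $o(h)$ terms (via $\|q\|_{\infty}$ and $\mu\le\mu_{0}$) and the measurability argument for the reduction $\E[q(\xi^{k_{*}+1})\mid\HS_{t}]=\E[q(\xi^{k_{*}+1})\mid\F_{k_{*}}]$, which the paper leaves implicit.
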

\begin{proof}For the  given sequence of the discrete-time process $\{\xi^{k}\}_{k\in\mathbb{Z}_+}$, in the time interval $(t,t+h]$,
$\Prob_1(r(t+h)=\xi^{k_{*}+1}|\{\xi^{k}\}_{k\in\mathbb{Z}_+})=h\mu(\xi^{k_{*}})+o(h)$
 as $h\rightarrow 0+$.  Thus, we have
$\E[q(r(t+h))|~\{\xi^{k}\}_{k\in\mathbb{Z}_+}]
=q(\xi^{k_{*}})
(1-h\mu(\xi^{k_{*}}))+q(\xi^{k_{*}+1})h\mu(\xi^{k_{*}})+o(h)$.
Then, taking conditional expectation on both sides of the above equation, we have
$\E[q(r(t+h))|\mathcal{H}_{t}]
=q(\xi^{k_{*}})(1-h\mu(\xi^{k_{*}}))+\E[q(\xi^{k_{*}+1})|\F_{k_{*}}]h\mu(\xi^{k_{*}})+o(h)$, which completes the proof.
\end{proof}

By letting $r(t)=\xi^{k}$, the infinitesimal generator of $V(x_{t},t,r(t))$ is provided in the following lemma.
\begin{lemma}\label{le3}
If $P(x, t, \xi)$ is continuous with respect to $(x, t, \xi)$, $C^1$ with respect to $(x,t)$,
$C^2$ with respect to $x$,
and there exists positive constant $C_{x,t}$ such that $P(x,t,\xi)\leq C_{x,t}$ for all $\xi\in\Omega$, and $Q(x,t)$ is measurable with respect to $(x,t)$,
then the (point-wise) infinitesimal generator of $V(x_{t},t,r(t))$ is given by
\begin{align}\label{AV}
   \hat{\mathcal{A}}V &=\frac{\partial P}{\partial t} +\frac{\partial P}{\partial x}f(x(t), x(t-\tau(t)), \xi^k, t)\\
   &+\frac{1}{2}\text{tr}\Big[g^{\top}(x(t), x(t-\tau(t)), \xi^k, t)
   \frac{\partial^2 P}{\partial x^2}
   g(x(t), x(t-\tau(t)), \xi^k, t)\Big]\nonumber\\
   &+\mu(\xi^k)\left\{\mathbb{E}[P(x(t),t,\xi^{k+1})|\mathcal{F}_k]-P(x(t),t,\xi^k)\right\}
   \nonumber\\
   &+Q(x(t),t)
   -(1-\tau^{\prime}(t))Q(x(t-\tau(t)),t-\tau(t)).\nonumber
\end{align}
\end{lemma}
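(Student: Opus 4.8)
The plan is to compute the pointwise limit
\[
\hat{\mathcal A}V=p-\lim_{h\to 0+}\frac{1}{h}\big(\mathbb E[V(x_{t+h},t+h,r(t+h))\mid\mathcal H_t]-V(x_t,t,r(t))\big)
\]
directly, using that the generator acts linearly on its domain so that the non-delay term $P(x(t),t,r(t))$ and the delay integral $R(t):=\int_{t-\tau(t)}^{t}Q(x(\theta),\theta)\,\mathrm d\theta$ may be treated separately; along the way one also checks that $V$ lies in $\mathcal D(\hat{\mathcal A})$, as anticipated before the statement.

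\textbf{Non-delay term.} Writing $r(t)=\xi^{k}$, split
\[
\mathbb E[P(x(t+h),t+h,r(t+h))\mid\mathcal H_t]-P(x(t),t,\xi^{k})=(\mathrm I)+(\mathrm{II}),
\]
with $(\mathrm{II})=\mathbb E[P(x(t+h),t+h,\xi^{k})\mid\mathcal H_t]-P(x(t),t,\xi^{k})$ freezing the switching argument and $(\mathrm I)=\mathbb E[P(x(t+h),t+h,r(t+h))-P(x(t+h),t+h,\xi^{k})\mid\mathcal H_t]$ isolating the jump. For $(\mathrm{II})$, apply It\^{o}'s formula to $s\mapsto P(x(s),s,\xi^{k})$ on $[t,t+h]$ along the diffusion (valid since $P$ is $C^{1}$ in $(x,t)$, $C^{2}$ in $x$, and $\xi^{k}$ is held fixed); the $\mathrm dW$ term is a genuine $\mathcal H_t$-martingale by the linear growth in $\mathbf H_{1}$ and the moment bounds of Theorem \ref{ext}, hence it vanishes under $\mathbb E[\cdot\mid\mathcal H_t]$, and after dividing by $h$ and letting $h\to 0+$ the Lebesgue differentiation theorem together with path-continuity of $x$ yields the first three lines of (\ref{AV}). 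For $(\mathrm I)$, condition on the number of switches in $(t,t+h]$: by the Cox structure of $r(\cdot)$ exploited in Lemma \ref{lem1}, no switch occurs with conditional probability $1-h\mu(\xi^{k})+o(h)$ (making the integrand zero), exactly one switch occurs with conditional probability $h\mu(\xi^{k})+o(h)$ (so $r(t+h)=\xi^{k+1}$), and two or more switches occur with conditional probability $o(h)$, contributing $o(h)$ because $P\le C_{x,t}$. Replacing $(x(t+h),t+h)$ by $(x(t),t)$ inside the one-switch term at a cost $o(h)$, and then applying Lemma \ref{lem1} with the $\mathcal H_t$-measurable choice $q(\cdot)=P(x(t),t,\cdot)$, gives $\frac{1}{h}(\mathrm I)\to\mu(\xi^{k})\{\mathbb E[P(x(t),t,\xi^{k+1})\mid\mathcal F_{k}]-P(x(t),t,\xi^{k})\}$, which is the fourth line of (\ref{AV}).

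\textbf{Delay integral.} With $a(t):=t-\tau(t)$ one has
\[
R(t+h)-R(t)=\int_{t}^{t+h}Q(x(\theta),\theta)\,\mathrm d\theta-\int_{a(t)}^{a(t+h)}Q(x(\theta),\theta)\,\mathrm d\theta,
\]
and for small $h$ the second integral involves only past values of $x$, since $a(t+h)\le t+h-\tau_{*}<t$ once $h<\tau_{*}$. Taking $\mathbb E[\cdot\mid\mathcal H_t]$, dividing by $h$, and using Lebesgue differentiation together with $a'(t)=1-\tau'(t)$ produces the last line of (\ref{AV}), namely $Q(x(t),t)-(1-\tau'(t))Q(x(t-\tau(t)),t-\tau(t))$. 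Adding the two contributions yields (\ref{AV}).

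\textbf{Main obstacle.} The delicate step is the decoupling of diffusion and switching in $(\mathrm I)$: one must show rigorously that sample paths on which $x$ has already moved appreciably \emph{and} a switch falls in the short window $(t,t+h]$ contribute only $o(h)$, so that in the limit the generator splits cleanly into a frozen-$\xi$ diffusion part and a frozen-$x$ jump part. This forces a careful combination of the $O(h)$ switch probability from the Cox process with the $O(\sqrt h)$ $L^{2}$-modulus of continuity of $x$ (from $\mathbf H_{1}$ and Theorem \ref{ext}) and the continuity and boundedness of $P$, all while keeping the limit pointwise in $t$ and almost sure in $\omega$ as the definition of $\hat{\mathcal A}$ requires; the remaining pieces—vanishing of the It\^{o} martingale term and the Lebesgue-differentiation limits for the time integrals—are routine.
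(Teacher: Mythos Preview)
Your proposal is correct and follows the same overall strategy as the paper: split $V$ into the $P$ part and the delay integral, and for the $P$ part decouple the diffusion contribution from the switching contribution, then invoke Lemma~\ref{lem1} for the latter and direct differentiation for the delay integral.

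The one noteworthy difference is the order of the decomposition. The paper Taylor-expands $P$ in $x$ about $x(t)$ while keeping the third argument at $r(t+h)$, so that the jump term appears already at the frozen point $(x(t),t)$, namely $[P(x(t),t,r(t+h))-P(x(t),t,r(t))]/h$, and the cross terms are absorbed into the blanket ``ignoring higher order terms''; it then appeals to the right-continuity of $r(\cdot)$ to replace $r(t+h)$ by $\xi^{k}$ in the diffusion coefficients after taking conditional expectation. You instead freeze $r$ at $\xi^{k}$ first and run a genuine It\^{o} formula on $[t,t+h]$ for the diffusion part $(\mathrm{II})$, leaving the jump part $(\mathrm I)$ evaluated at the moved point $(x(t+h),t+h)$, and then argue explicitly that this can be replaced by $(x(t),t)$ at cost $o(h)$ using the $O(h)$ switch probability together with the $O(\sqrt h)$ modulus of $x$. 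Your ordering makes the ``main obstacle'' (the diffusion--switching cross term) visible and handled, whereas the paper hides it in $o(h)$; conversely, the paper's ordering lets Lemma~\ref{lem1} apply verbatim without the extra replacement step. Either route yields (\ref{AV}).
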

\begin{proof}Consider the difference quotient, by ignoring the higher order terms and letting $\delta x=x(t+h)-x(t)$, we have
\begin{align*}
&{[V(x_{t+h},t+h,r(t+h))-V(x_{t},t,r(t))]}/{h}\\
=&\frac{\partial P}{\partial x}(x(t),t+h,r(t+h))\frac{\delta x}{h}
+\frac{1}{2h}\delta x^{\top}\frac{\partial^{2}P}{\partial x^{2}}(x(t),t+h,r(t+h))\delta x\\
+&\frac{\partial P}{\partial t}(x(t),t,r(t+h))
+\frac{1}{h}[P(x(t),t,r(t+h))-P(x(t),t,r(t))]+o(h).
\end{align*}
Then, by taking conditional expectation on both sides of the above equation with respect to $\HS_{t}$,
according to Lemma \ref{lem1}, the right continuity of $r(t)$, continuity of $P(\cdot,\cdot,\cdot)$ and independence of the components of $W_{t}$, noting $\E(\mathrm{d}W_{t+s}|\HS_{t})=0$ for all $s>0$, and taking limit as $h\to 0+$, it is obtained that
\begin{align*}
   &\hat{\mathcal{A}}P(x(t),t,r(t)) \\
   =& \frac{\partial P}{\partial t} +\frac{\partial P}{\partial x}f(x(t), x(t-\tau(t)), \xi^k, t)\\
   +&\frac{1}{2}\text{tr}\Big[g^{\top}(x(t), x(t-\tau(t)), \xi^k, t)\frac{\partial^2 P}{\partial x^2} g(x(t), x(t-\tau(t)), \xi^k, t)\Big]
   \\
   +&\mu(\xi^k)\left\{\mathbb{E}[P(x(t),t,\xi^{k+1})|\mathcal{F}_k]-P(x(t),t,\xi^k)\right\}.
\end{align*}
Moreover, it is easy to verify that $\A \int_{t-\tau(t)}^{t}Q(x(\theta),\theta)\mathrm{d}\theta=Q(x(t),t)-(1-\tau^{\prime}(t))Q(x(t-\tau(t)),t-\tau(t))$, which completes the proof.
\end{proof}

The infinitesimal generator of $V(x_{t},t,r(t))$ given in Lemma \ref{le3} has the term
\begin{align}
\chi_{k}=\mu(\xi^{k})\left\{\E\left[P(x(t),t,\xi^{k+1})|\F_{k}\right]-P(x(t),t,\xi^{k})\right\},
\label{chi}
\end{align}
computed in the following cases of different types of $r(t)$.

\paragraph{Independent switching} Consider an independent process $\{\xi^k\}$, possibly non-identically distributed.
Letting $r(t)=\xi^k$ with $k=N(t)$, since
$\E[\xi^{k+1}|\F_{k}]=\E[\xi^{k+1}]$,
we have
$\chi_k=\mu(\xi^k)\{
    \mathbb{E}[P(x(t),t,\xi^{k+1})]-P(x(t),t,\xi^{k})
\}$.

\paragraph{Homogeneous Markovian switching} Consider a Markovian jumping process induced by a homogeneous continuous Markov chain with state space $\Omega$ and transition probability $\TT(\mathrm{d}\xi',\xi)$. Letting $r(t)=\xi^{k}$, $k=N(t)$, and using the Markov property, (\ref{chi}) becomes 
$\chi_{k}=\mu(\xi^{k})\left\{\int_{\Omega}P(x(t),t,\xi')\TT(\mathrm{d}\xi',\xi^{k})-P(x(t),t,\xi^{k})\right\}$.

In particular, if $\Omega$ has finite states, i.e., $\Omega=\{\xi_{i}\}_{i=1}^{K}$ with a transition probability matrix $R=[r_{ij}]_{i,j=1}^{K}$ with $r(t)=\xi_{i}$, (\ref{chi}) becomes $\chi_k=\mu(\xi_i)\{\sum_{j=1}^Kr_{ij}P(x(t), t, \xi_j)-P(x(t),t,\xi_i)\}$.

\paragraph{Hidden Markov process} Let $\{\zeta^{k},\xi^{k}\}$ be a hidden Markov process, where $\{\zeta^{k}\}$ is a homogeneous Markov chain with transition probability $\TT(\mathrm{d}\zeta',\zeta)$ on state space $\Omega_{1}$, and $\xi^{k}$ is dependent on $\zeta^{k}$ with a homogeneous conditional probability $\Prob(\mathrm{d}\xi^{k}|\zeta^{k})$ for all $k$. Then (\ref{chi}) becomes
$\chi_{k}=\mu(\xi)\times\{\int_{\Omega}P(x(t),t,\xi)\int_{\Omega_{1}}
\Prob(\mathrm{d}\xi|\zeta')\TT(\mathrm{d}\zeta',\zeta^{k})
-P(x(t),t,\xi^{k})\}$.

\section{Stability Analysis}\label{sa}

In this section, the stability issue is investigated for the time-varying delayed stochastic dynamical system (\ref{ds}). Two stability criteria are derived by using two different analysis approaches. Herein, we present the following Doob's first convergence theorem which plays an essential role for our main results.

\begin{lemma}\label{lem3}(Doob's first convergence theorem \cite{Oks})
   Let $N_{t}$ be a right continuous supermartingale with the property $\sup_{t>0}\E[N_{t}^{-}]<+\infty$, where $N_{t}^{-}=\max\{-N_{t},0\}$.
Then, the pointwise limit $N(\omega) =\lim_{t\rightarrow+\infty}N_t(\omega)$ exists for almost all $\omega$.
\end{lemma}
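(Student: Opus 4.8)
The plan is to follow the classical route through Doob's upcrossing inequality, the only extra care being the passage from discrete to continuous time via right-continuity. First I would upgrade the hypothesis to full $L^1$-boundedness: since $N_t$ is a supermartingale, $\E[N_t]\le\E[N_0]$ for all $t$, so $\E[N_t^+]=\E[N_t]+\E[N_t^-]\le\E[N_0]+\sup_{s>0}\E[N_s^-]$, and hence $C:=\sup_{t>0}\E[|N_t|]=\sup_{t>0}\bigl(\E[N_t^+]+\E[N_t^-]\bigr)<+\infty$.

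Next, fix a countable dense set $D\subset\R_+$ with $0\in D$ (say the nonnegative dyadic rationals) and fix rationals $a<b$. For a finite set $F=\{s_0<s_1<\dots<s_m\}\subset D$, let $U_F[a,b]$ denote the number of upcrossings of $[a,b]$ performed by the finite sequence $(N_{s_0},\dots,N_{s_m})$. The discrete-time upcrossing inequality $(b-a)\,\E[U_F[a,b]]\le\E[(N_{s_m}-a)^-]\le|a|+\sup_{t>0}\E[N_t^-]$ then holds; it follows from optional stopping applied to the finite supermartingale $(N_{s_i})_{i=0}^m$ at the stopping times $\sigma_1\le\tau_1\le\sigma_2\le\tau_2\le\cdots$ that alternately record the first index (after the previous one) at which the sequence lies $\le a$ and the first index at which it lies $\ge b$. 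Letting $F\uparrow D$ and invoking monotone convergence, the total number $U^D_\infty[a,b]$ of upcrossings of $[a,b]$ along $D$ obeys $(b-a)\,\E\bigl[U^D_\infty[a,b]\bigr]\le|a|+\sup_{t>0}\E[N_t^-]<+\infty$, whence $U^D_\infty[a,b]<+\infty$ almost surely.

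Intersecting the corresponding null sets over all rational pairs $a<b$, on an event $\Omega_0$ of probability one we have $U^D_\infty[a,b]<+\infty$ for every rational $a<b$. On $\Omega_0$ it is impossible that $\liminf_{t\to\infty,\,t\in D}N_t<a<b<\limsup_{t\to\infty,\,t\in D}N_t$ for some rationals, since that would force infinitely many upcrossings of $[a,b]$ along $D$; therefore $\lim_{t\to\infty,\,t\in D}N_t$ exists in $[-\infty,+\infty]$, call it $\ell(\omega)$. By right-continuity of $t\mapsto N_t$ one upgrades this to the full limit: given $\epsilon>0$, pick $T$ with $|N_s-\ell|<\epsilon$ for all $s\in D$ with $s\ge T$; for arbitrary $t\ge T$, right-continuity at $t$ gives $\delta>0$ with $|N_s-N_t|<\epsilon$ for $s\in[t,t+\delta)$, and choosing $s\in D\cap[t,t+\delta)$ (possible by density) yields $|N_t-\ell|<2\epsilon$. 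Hence $N(\omega):=\lim_{t\to+\infty}N_t(\omega)$ exists a.s. Finally, Fatou's lemma along a sequence in $D$ tending to infinity gives $\E[|N|]\le\liminf_{t\to\infty,\,t\in D}\E[|N_t|]\le C<+\infty$, so $N$ is almost surely finite (and measurable, being an a.s.\ limit of measurable maps along a countable set).

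The main obstacle is the upcrossing inequality itself: its discrete-time form needs the careful optional-stopping argument sketched above, and transferring it to continuous time is exactly where right-continuity is indispensable, since only then does the (a.s.\ finite) number of upcrossings along the countable set $D$ control the genuine asymptotic oscillation $\limsup_t N_t-\liminf_t N_t$ of the paths. The remaining ingredients --- the $L^1$ bound, the countable union over rationals, and the Fatou step --- are routine.
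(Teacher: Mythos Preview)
Your proof is correct and follows the classical route via Doob's upcrossing inequality, with the standard passage from a countable dense time set to the full continuous-time limit through right-continuity. The only cosmetic gap is in the upgrading step: you write $|N_s-\ell|<\epsilon$, which presupposes $\ell$ is finite, whereas at that stage you have only shown $\ell\in[-\infty,+\infty]$; the infinite cases need the obvious one-sided variant (e.g.\ $N_s>M$ for large $s\in D$), after which Fatou rules them out as you indicate.

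As for comparison with the paper: the paper does not prove this lemma at all. It is quoted as a known result from \O ksendal's textbook and used as a black box in the proofs of Theorems~\ref{LST} and~\ref{Hal}. So there is no ``paper's own proof'' to compare against; your sketch simply supplies what the paper takes for granted, and it does so by the standard textbook argument.
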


Firstly, the stability analysis is performed by the approach based on the Lyapunov functional with extension to the stochastic case. Let $V(x_{t},t,\xi)$ be a (random) functional and the corresponding sufficient condition is provided under which the solution $x(t)$ to the time-varying delayed stochastic dynamical system (\ref{ds}) converges to zero almost surely.

\begin{theorem}\label{LST}
Suppose that Hypothesis $\mathbf H_{1}$ holds and the following conditions are satisfied:
\begin{enumerate}
\item {
   $V(x_t,t,\xi)$ is continuous with respect to ($x_t, t, \xi$),
       $V(x_t,t,\xi)\geq\nu(t)\omega(x(t))$ for all $t\geq 0$, all $\xi\in\Omega$,
       and some non-negative function
       $\omega(\cdot)$, non-random $\nu(t)> 0$ with $\lim_{t\rightarrow+\infty}\nu(t)=+\infty$;
   }
\item $\A V(x_{t},t,r(t))\le 0$ for all $t\ge 0$ and $r(t)\in\Omega$;
\item $\E V(\phi,0,r(0))<+\infty$ for any $\phi\in C^b_{\mathcal{H}_0}([-\tau_b,0])$.
\end{enumerate}
Then, we have the following results:
\begin{enumerate}
\item{
   $\lim_{t\to+\infty}\E[\omega(x(t))]=0$;
}
\item{
  the system is stable in probability, if $\omega(x)$ is continuous and strictly positive definite, i.e., $\omega(x)=0$ iff $|x|=0$.
}
\item{
  the system is $\nu$-mean-stable and $\nu$-stable in probability, if there exists positive constants $c_1\leq c_2$ such that $c_1|x|^2\leq \omega(x)\leq c_2|x|^2$.
}
\end{enumerate}
\end{theorem}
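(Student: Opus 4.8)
The plan is to combine the Generalized Dynkin's formula (Lemma \ref{thm3}) with Doob's first convergence theorem (Lemma \ref{lem3}) applied to the process $N_t := V(x_t, t, r(t))$, and then translate almost-sure convergence into the various stability notions. First I would verify that $V(x_t,t,r(t))$ is a nonnegative right-continuous supermartingale: nonnegativity follows from condition 1 together with $\omega(\cdot)\ge 0$ and $\nu(t)>0$; right-continuity follows from the continuity of $V$ in its arguments and the right-continuity of $r(t)$ and $x(t)$; and the supermartingale property is exactly what the Dynkin formula \eqref{DK} gives once condition 2 ($\A V \le 0$) is invoked, namely $\E(V(x_s,s,r(s))\mid \HS_t) - V(x_t,t,r(t)) = \E(\int_t^s \A V(u)\,\mathrm{d}u\mid \HS_t) \le 0$ for $s\ge t$. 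Since $N_t \ge 0$ we trivially have $N_t^- = 0$, so $\sup_{t>0}\E[N_t^-] = 0 < +\infty$, and Lemma \ref{lem3} yields a finite pointwise limit $N_\infty(\omega) = \lim_{t\to\infty} V(x_t,t,r(t))$ almost surely. Moreover the supermartingale property plus condition 3 gives $\E[V(x_t,t,r(t))] \le \E[V(\phi,0,r(0))] =: M_0 < +\infty$ uniformly in $t$.

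For the first conclusion, $\lim_{t\to\infty}\E[\omega(x(t))] = 0$: from condition 1, $\omega(x(t)) \le V(x_t,t,r(t))/\nu(t)$, so $\E[\omega(x(t))] \le M_0/\nu(t) \to 0$ since $\nu(t)\to+\infty$. For the second conclusion, stability in probability when $\omega$ is continuous and strictly positive definite: I would argue that $\E[\omega(x(t))]\to 0$ forces $\omega(x(t))\to 0$ in probability, and then use the strict positive-definiteness to upgrade this to $|x(t)|\to 0$ in probability. The slightly delicate point here is that "strictly positive definite" as stated only says $\omega(x)=0 \iff |x|=0$, which without a coercivity/lower-bound hypothesis does not immediately give $\Prob(|x(t)|>\epsilon)\to 0$ from $\Prob(\omega(x(t))>\delta)\to 0$; one needs either an implicit assumption that $\inf_{|x|\ge\epsilon}\omega(x)>0$ (which holds if $\omega$ is continuous and one also has some control at infinity, or if one works on the event that $x(t)$ stays bounded, which follows from the a.s. finiteness of $N_\infty$ via the lower bound $V\ge \nu(t)\omega(x(t))$ forcing $\omega(x(t))\to 0$ a.s. and hence, by continuity and positive-definiteness, $x(t)\to 0$ a.s. along the bounded trajectory). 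I would therefore route conclusion 2 through the almost-sure statement: $N_\infty<\infty$ a.s. and $\nu(t)\to\infty$ give $\omega(x(t))\to 0$ a.s., whence $x(t)\to 0$ a.s., which implies convergence in probability and hence Definition \ref{def:as}.

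For the third conclusion, under the two-sided bound $c_1|x|^2 \le \omega(x) \le c_2 |x|^2$: the $\nu$-mean-square stability is immediate, since $\E[|x(t)|^2] \le \tfrac{1}{c_1}\E[\omega(x(t))] \le \tfrac{M_0}{c_1}\cdot\tfrac{1}{\nu(t)}$, so $M := M_0/c_1$ works in Definition \ref{def:nu-mss}. For $\nu$-stability in probability (Definition \ref{def:nu-as}), I would take the random variable $\xi := \tfrac{1}{c_1}\liminf_{t\to\infty}\nu(t)\,\omega(x(t))$, or more cleanly $\xi := \tfrac{1}{c_1}N_\infty$ where $N_\infty = \lim_t V(x_t,t,r(t))$ exists a.s. and is finite; then $c_1|x(t)|^2 \le \omega(x(t)) \le V(x_t,t,r(t))/\nu(t)$, and since $V(x_t,t,r(t))\to N_\infty$ a.s. we get $\Prob(|x(t)|^2 \le \xi/\nu(t)) \to 1$, while $\Prob(\xi>R) = \Prob(N_\infty > c_1 R) \to 0$ as $R\to\infty$ because $N_\infty$ is a.s. finite. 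I expect the main obstacle to be conclusion 2: making the passage from $\E[\omega(x(t))]\to 0$ (or $\omega(x(t))\to 0$ a.s.) to $|x(t)|\to 0$ rigorous under only the stated hypothesis on $\omega$, since without some uniform lower bound on $\omega$ away from the origin or some a priori boundedness of $x(t)$, positive-definiteness alone is not quite enough; the clean fix is to exploit the a.s. boundedness of the trajectory implied by $N_\infty<\infty$ and $V\ge\nu(t)\omega(x(t))$ with $\nu$ bounded below on compacts and tending to infinity, which confines $x(t)$ to a compact set on which continuity and positive-definiteness of $\omega$ do give the desired coercivity.
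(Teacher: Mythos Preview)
Your proposal is correct and follows essentially the same route as the paper: Dynkin's formula for the supermartingale property, the uniform expectation bound $\E V(t)\le \E V(0)$ for conclusion 1, and Doob's convergence theorem for the a.s.\ finite limit that drives conclusions 2 and 3. Two minor remarks: for $\nu$-stability in probability you should take $\xi=(N_\infty+1)/c_1$ rather than $N_\infty/c_1$, since a.s.\ convergence of $V(t)$ to $N_\infty$ only guarantees $V(t)<N_\infty+1$ eventually in probability (this is precisely the paper's choice $\xi=V_\infty+1$); and your caution about conclusion 2 is well-placed --- the paper simply asserts that it ``holds directly by Definition~\ref{def:as}'' without addressing the coercivity issue you flag, so your more careful a.s.\ argument is an improvement rather than a deviation.
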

\begin{proof}For the purpose of notation simplicity, $V(x_{t},t,r(t))$ is denoted by $V(t)$. From the generalized Dynkin's formula (\ref{DK}), one has
\begin{align*}
\E(V(t)|\HS_{s})=V(s)+\E\left(\int_{s}^{t}\A V(\theta)\mathrm{d}\theta|\HS_{s}\right)\le V(s)
\end{align*}
for any $s\le t$, which implies that $V(t)$ is supermartingale.
Taking expectation on both sides of the above inequality, we have
$\nu(t)\E(\omega(x(t)))\le\E(V(t))\le\E(V(0))$.
Thus, conclusion 1) follows by noting $\lim_{t\to+\infty}\nu(t)=+\infty$.

In addition, since $V(t)$ is a non-negative continuous supermartingale, from Doob's first martingale convergence theorem (Lemma \ref{lem3}), it is known that there exists a random variable $V_{\infty}$ such that $\lim_{t\to+\infty}V(t)=V_{\infty}$
almost surely.
Thus $\lim_{t\to+\infty}\mathbb{P}(|V(t)-V_{\infty}|\geq 1)=0$.
Moreover, since $V(0)\in L^1$, we have $V_{\infty}\in L^1$, which implies that
$\mathbb{P}(V_{\infty}=+\infty) = 0$.
Note that
$\mathbb{P}\left(w(x(t))<{(V_{\infty}+1)}/{\nu(t)}\right)\geq
\mathbb{P}(V(t)<V_{\infty}+1)$,
thus
$\mathbb{P}\left(
      w(x(t))<{(V_{\infty}+1)}/{\nu(t)}
   \right)\rightarrow 1$, as $t\rightarrow+\infty$.
When $\omega(x)$ is continuous and strictly positive definite, conclusion
2) holds directly by Definition \ref{def:as}.
When $c_1|x|^2\leq \omega(x)\leq c_2|x|^2$,
we can derive $\nu$-mean-stability directly by Definition \ref{def:nu-mss},
and $\nu$-stability in probability by letting $\xi=V_{\infty}+1$ in Definition \ref{def:nu-as}.
\end{proof}

\begin{remark}
If $r(t)$ is a homogeneous Markov chain with finite states, i.e., the well-known Markovian jumping process, then the results in Theorem \ref{LST} can be analog to those in \cite{Mao1}.
\end{remark}

Secondly, we develop an approach (similar to the celebrated Halanay inequality \cite{LLC}) to obtain the stability criterion for the time-varying delayed stochastic dynamical system (\ref{ds}). For simplicity of presentation, we denote  
$V(x_\theta,\theta,r(\theta)):=V(\phi,0,r(0))$, $\theta\in[-\tau_b,0)$, and give the following conclusion.
\begin{theorem}\label{Hal}
Suppose that Hypothesis $\mathbf H_{1}$ holds and the following conditions are satisfied:
\begin{enumerate}
\item {
$V(x_t,t,\xi)$ is continuous with respect to ($x_t, t, \xi$),
    $V(x_t,t,\xi)\geq\nu(t)\omega(x(t))$ for all $t\geq 0$, all $\xi\in\Omega$,
    and some non-negative function
    $\omega(\cdot)$, non-random $\nu(t)> 0$ with $\lim_{t\rightarrow+\infty}\nu(t)=+\infty$;
}
\item
{
   $\hat{\mathcal{A}}V(x_t,t,r(t))\leq J(t) -\alpha(t)V(x_t,t,r(t))+\beta(t)\psi(x_t,t,r(t))$ for some non-random
right continuous positive $\alpha(t)$ and $\beta(t)$, some non-negative continuous functional $\psi(\cdot)$, and
some positive $\mathcal{H}_t$-adapted right-continuous process $J(t)$ with $\mathbb{E}(J(t))\leq J_0$ for all $t\geq 0$
and some positive constant $J_0$;
}

\item $\alpha(t)-\beta(t)\ge \eta$ for all $t\ge 0$ and some $\eta>0$;

\item for all continuous function $y$ and time $s\geq 0$ and $t\geq s$,
\begin{align*}
   \E[\psi(y_{t},t,r(t))|\HS_{s}]\le\sup_{t-\tau(t)\le \theta\le t}\E[V(y_\theta,\theta,r(\theta))|\HS_{s}];
   \end{align*}
\item {
   $\E(V(\phi,0,r(0)))<+\infty$ for any $\phi\in C^b_{\mathcal{H}_0}([-\tau_b,0])$.
}
\end{enumerate}
Then, $\lim_{t\to+\infty}\E[\omega(x(t))]=0$. Furthermore, by replacing $J(t)\leq J_1$ for all $t$ and some positive constant $J_1$ in condition 2), then we have the following results:
\begin{enumerate}
   \item{
      the system is stable in probability, if $\omega(x)$ is continuous and strictly positive definite, i.e., $\omega(x)=0$ iff $|x|=0$.
    }
    \item{
      the system is $\nu$-mean-stable and $\nu$-stable in probability, if there exists positive constants $c_1\leq c_2$ such that $c_1|x|^2\leq \omega(x)\leq c_2|x|^2$.
    }
\end{enumerate}
\end{theorem}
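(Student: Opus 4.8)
The plan is to reduce the whole statement to a single uniform-in-$t$ bound on $\E[V(x_t,t,r(t))]$, obtained by feeding the generalized Dynkin formula (Lemma~\ref{thm3}) into a stochastic form of the Halanay inequality. Once I have $\sup_{t\ge0}\E[V(x_t,t,r(t))]<+\infty$, condition~1 gives $\nu(t)\,\E[\omega(x(t))]\le\E[V(x_t,t,r(t))]$, so $\nu(t)\to+\infty$ immediately forces $\E[\omega(x(t))]\to0$; the refined statements will come from upgrading this to a \emph{pathwise} (in the $\HS_0$-variable) bound under the stronger hypothesis $J(t)\le J_1$.

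First I would fix the $\HS_0$-conditional mean $v(t):=\E[V(x_t,t,r(t))\mid\HS_0]$ for $t\ge-\tau_b$, noting $v\equiv V(\phi,0,r(0))$ on $[-\tau_b,0]$ by the convention stated before the theorem. Applying Lemma~\ref{thm3} on $[0,t]$ under the conditioning $\HS_0$, then using condition~2 pointwise in $(\omega,s)$ together with the conditional Fubini theorem (the coefficients $\alpha(\cdot),\beta(\cdot)$ being non-random), I obtain $v(t)\le V(\phi,0,r(0))+\int_0^t\big(\hat J(s)-\alpha(s)v(s)+\beta(s)\,\E[\psi(x_s,s,r(s))\mid\HS_0]\big)\,\mathrm{d}s$, where $\hat J(s):=\E[J(s)\mid\HS_0]$ obeys $\E[\hat J(s)]=\E[J(s)]\le J_0$. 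Next, condition~4 applied with $s=0$ along the (continuous) solution path $y=x$ bounds $\E[\psi(x_s,s,r(s))\mid\HS_0]\le\sup_{s-\tau(s)\le u\le s}\E[V(x_u,u,r(u))\mid\HS_0]=\sup_{s-\tau(s)\le u\le s}v(u)$ (for $u<0$ the right-hand side reads $V(\phi,0,r(0))$, consistently with the definition of $v$). Substituting, $v$ is seen to satisfy the forced, time-varying Halanay integral inequality $v(t)\le V(\phi,0,r(0))+\int_0^t\big(\hat J(s)-\alpha(s)v(s)+\beta(s)\sup_{s-\tau(s)\le u\le s}v(u)\big)\,\mathrm{d}s$ with initial segment identically $V(\phi,0,r(0))$.

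Now I would invoke the generalized Halanay inequality of \cite{LLC}, extended to carry the forcing term $\hat J$, together with the dissipativity condition~3 ($\alpha(t)-\beta(t)\ge\eta>0$), to conclude that there are non-random constants $C_1,C_2,\gamma>0$ with $v(t)\le C_1\,V(\phi,0,r(0))\,e^{-\gamma t}+C_2\int_0^t e^{-\gamma(t-s)}\hat J(s)\,\mathrm{d}s$ for all $t\ge0$. Taking expectations and using condition~5 and $\E[\hat J(s)]\le J_0$ gives $\E[V(x_t,t,r(t))]=\E[v(t)]\le C_1\E[V(\phi,0,r(0))]+C_2J_0/\gamma<+\infty$ uniformly in $t$, completing the first assertion. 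For the refined statements I use that $J(t)\le J_1$ forces $\hat J(s)\le J_1$ a.s., so the same bound becomes the pathwise estimate $v(t)\le\Xi:=C_1V(\phi,0,r(0))+C_2J_1/\gamma$, an $\HS_0$-measurable, a.s.\ finite positive random variable with $\E[\Xi]<+\infty$. When $c_1|x|^2\le\omega(x)\le c_2|x|^2$, this yields $c_1\nu(t)\E[|x(t)|^2\mid\HS_0]\le\nu(t)\E[\omega(x(t))\mid\HS_0]\le v(t)\le\Xi$; taking expectations gives $\E[|x(t)|^2]\le\E[\Xi]/(c_1\nu(t))$, i.e.\ $\nu$-mean-stability, while the conditional Markov inequality applied to $\nu(t)|x(t)|^2$ with $\xi=\Xi/c_1$ gives $\nu$-stability in probability by the same endgame as in the last paragraph of the proof of Theorem~\ref{LST} ($\Xi$ playing the role of $V_\infty+1$ there). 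When instead $\omega$ is continuous and strictly positive definite, $\E[\omega(x(t))]\to0$ gives $\omega(x(t))\to0$ in probability, hence stability in probability, again as in Theorem~\ref{LST}.

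I expect the main obstacle to be the Halanay step: one must state and prove the generalized Halanay inequality under exactly these weak hypotheses --- time-varying $\alpha(\cdot),\beta(\cdot)$, a delay $\tau(\cdot)$ that is only assumed to satisfy $\tau_*>0$ and $\sup_t(\tau(t)-t)=\tau_b<+\infty$ (hence possibly unbounded, with the delay window reaching back no further than $-\tau_b$), \emph{and} a forcing term --- and, crucially, one must obtain the \emph{convolution-type} bound above rather than a mere peak bound, because only the convolution form lets the expectation pass through so that the integral hypothesis $\E[J(t)]\le J_0$ (as opposed to a pointwise bound) suffices for the first conclusion. Secondary points that still need care are: verifying $V\in D(\A)$ with the integrability that makes Lemma~\ref{thm3} applicable; justifying the use of condition~4 along the random solution path inside a conditional expectation (a measurability issue); and checking that the Markov-inequality argument really produces convergence to $1$ in the precise sense of Definitions~\ref{def:nu-mss}--\ref{def:nu-as}.
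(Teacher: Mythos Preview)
The first conclusion is handled more simply in the paper: rather than conditioning on $\HS_0$ and then needing a convolution-type Halanay bound with random forcing $\hat J(s)=\E[J(s)\mid\HS_0]$, the paper takes the \emph{unconditional} expectation of the Dynkin identity immediately. The forcing term in the resulting scalar delay inequality for $t\mapsto\E[V(t)]$ is then the deterministic constant $J_0$, and the peak bound $\E[V(t)]\le\max\{J_0/\eta,\E[V(0)]\}$ from \cite{LLC} suffices directly. Your route, as you yourself flag, requires a forced convolution form of the Halanay inequality that \cite{LLC} does not provide; this detour is avoidable.

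For the refined conclusions under $J(t)\le J_1$ there is a genuine gap in the $\nu$-stability in probability part. In Theorem~\ref{LST} the random variable $V_\infty+1$ works because it is an \emph{almost sure} eventual upper bound for $V(t)$, produced by Doob's convergence theorem applied to a supermartingale; this is what makes $\Prob(\nu(t)\omega(x(t))\le V_\infty+1)\to1$. Your $\Xi$, by contrast, only dominates the $\HS_0$-conditional expectation $v(t)=\E[V(t)\mid\HS_0]$, not $V(t)$ pathwise. Conditional Markov then yields $\Prob(\nu(t)|x(t)|^2>R\mid\HS_0)\le\Xi/(c_1R)$, a bound that is uniform in $t$ but does not tend to $0$ as $t\to\infty$; no random $\xi$ with $\Prob(|x(t)|^2\le\xi/\nu(t))\to1$ (Definition~\ref{def:nu-as}) can be extracted this way, so $\Xi$ cannot play the role you assign it. The paper closes this gap by conditioning on $\HS_s$ for \emph{every} $s\ge0$ (not just $s=0$) and applying the Halanay peak bound with constant forcing $J_1$ to $t\mapsto\E[V(t)\mid\HS_s]$ on $[s,\infty)$, obtaining $\E[V(t)\mid\HS_s]\le\max\{J_1/\eta,V(s)\}$; hence $W(t):=\max\{V(t),J_1/\eta\}$ is a continuous nonnegative supermartingale, and Lemma~\ref{lem3} supplies the a.s.\ limit $W_\infty$ needed for the endgame. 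Your arguments for $\nu$-mean-square stability and for plain stability in probability are otherwise sound; to repair the $\nu$-stability in probability claim you must upgrade from $\HS_0$-conditioning to $\HS_s$-conditioning for all $s$ and recover a supermartingale.
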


\begin{proof} Denoting $V(x_t,t,r(t))=V(t)$ for simplicity, from the generalized Dynkin's formula (\ref{DK}), we have
$\E(V(t)|\HS_{s})=
\E\left[\int_{s}^{t}\A V(t')\mathrm{d}t'|\HS_{s}\right]+V(s)$.
Then by conditions 2) and 3), we have
\begin{align*}
&\E(V(t)|\HS_{s})
\le V(s)+\int_{s}^{t}\Bigg[-\alpha(t')\E(V(t')|\HS_{s})+\beta(t')\sup_{t'-\tau(t')\le\theta\le t'}\E(V(\theta)|\HS_{s})+\E(J(t')|\HS_{s})\Bigg]\mathrm{d}t'.
\end{align*}
Hence, with a fixed $s\geq 0$ and for any $t\ge s$, the upper-right Dini derivative of $\E(V(t)|\HS_{s})$ exists and is given by
\begin{align}
D^{+}\E(V(t)|\HS_{s})\le& -\alpha(t)\E(V(t)|\HS_{s})+\E(J(t')|\HS_{s})+\beta(t)\sup_{t-\tau(t)\le\theta\le t}\E(V(\theta)|\HS_{s}).\label{Dini}
\end{align}
Taking expectation on both sides of (\ref{Dini}), we further have
\begin{align*}
D^{+}\E(V(t))\le -\alpha(t)\E(V(t))+\beta(t)\sup_{t-\tau(t)\le\theta\le t}\E(V(\theta))+J_{0}.
\end{align*}
By employing the generalized Halanay inequality \cite{LLC}, it can be obtained that $
\E(V(t))\le\max\{(J_{0}/\eta),\E(V(0))\}$
for all $t\ge 0$. This implies 
\begin{align*}
	\E(\omega(x(t)))\le(1/\nu(t))\max\{(J_{0}/\eta),\E(V(0))\}.
\end{align*}
Therefore, we have $\lim_{t\to+\infty}\E(\omega(x(t)))=0$.

Under the additional conditions, it follows from (\ref{Dini}) that
\begin{align*}
D^{+}\E(V(t)|\HS_{s})\le& -\alpha(t)\E(V(t)|\HS_{s})+J_{1}+\beta(t)\sup_{t-\tau(t)\le\theta\le t}\E(V(\theta)|\HS_{s}).
\end{align*}
Then, by using the generalized Halanay inequality \cite{LLC} again, we obtain
$\E(V(t)|\HS_{s})\le\max\left\{{J_{1}}/{\eta},V(s)\right\}$. Define $W(t)=\max\{V(t),J_{1}/\eta\}$, and we have $\E(W(t)|\HS_{s})\le W(s)$, which means that $W(t)$ is a continuous non-negative supermartingale.
By Lemma \ref{lem3}, it is known that there exists a random variable $W_{\infty}$ such that $\lim_{t\to+\infty}W(t)=W_{\infty}$ holds
almost surely.
Then, conclusions 1) and 2) hold by following the same proof line of Theorem \ref{LST}.
\end{proof}

\begin{remark}
It should be pointed out that our results allow that each subsystem is not necessarily stable according to the presented Lyapunov functional/Halanay inequality.
The numerical examples in Section \ref{sec:examples} will show this argument.
\end{remark}

\section{Stability of Switching Stochastic Nonlinear Systems}

In this section, the results derived in Section \ref{sa} are applied to a special class of time-varying delayed SDSs, and the corresponding stability criteria are established.

Consider a switching delayed networked system with stochastic perturbations as follows:
\begin{align}\label{nn}
   \mathrm{d}x(t)=&[-D(r(t))x(t)+A(r(t))g(x(t))+B(r(t))g(x(t-\tau(t)))]\mathrm{d}t
+\sigma(g(x(t)),g(x(t-\tau(t))),r(t))\mathrm{d}W_{t},
 \end{align}
where $x(t)=[x_{1}(t),\cdots,x_{n}(t)]^{\top}$ stands for the state vector with $x_{i}(t)$ being the state of node $i$, $i=1,\cdots,n$; $D(r(t))=\text{diag}[D_{1}(r(t)),\cdots,D_{n}(r(t))]\in\mathbb R^{n\times n}$ is a positive definite diagonal matrix; $A(r(t))$ and $B(r(t))\in\mathbb R^{n\times n}$ are the connection weight matrix and the delayed connection weight matrix, respectively; $g(x)=[g_{1}(x_{i}),\cdots,g_{n}(x_{n})]^{\top}$ denotes the nonlinear output function; $\sigma(\cdot,\cdot,\cdot)$ is the noise intensity function; and $r(t)$, $W_{t}$ and $\tau(t)$ are defined as the same in Section \ref{sec2}.

\begin{remark}
We point out that system (\ref{nn}) can be regarded as a stochastic extension of the delayed neural network\cite{new_WFZ,new_WSSC}, incorporating the influence of both non-Markovian switching and time-varying delays.
\end{remark}

For $g$ and $\sigma$, we impose the following hypotheses.

$\mathbf H_{2}$: The nonlinear function $g(\cdot)$ is $C^{1}$ and satisfies
\begin{enumerate}
\item  $g_{i}(0)=0$;
\item $0\le g_i'(\cdot)\le G_{i}$ for all $i=1,\cdots, n$.
\end{enumerate}

$\mathbf H_{3}$: The noise intensity function $\sigma(\cdot,\cdot,\cdot)$ satisfies
\begin{enumerate}
\item uniformly local Lipschitz and linear growth condition;
\item {
   there exist measurable semi-positive definite matrix-valued function $E(\cdot)$ and $F(\cdot)$ such that for all $x,y\in\mathbb{R}^n$ and all $\xi\in\Omega$, it holds 
   \begin{align*}
     \text{tr}\left[\sigma^{\top}(x,y,\xi) \sigma(x,y,\xi)\right]\leq x^{\top}E(\xi)x +y^{\top}F(\xi) y;
   \end{align*}
}
\item {
   there exist measurable positive definite matrix-valued function $P(\cdot)$,
    positive continuous function $a(\cdot)$ and positive constants $C_1\leq C_2$ such that $
   C_1 I_n\leq P(\xi) \leq C_2 I_n$ holds for all $\xi\in\Omega$, and
    \begin{align*}
        \text{tr}\left[\sigma^{\top}(x,y,\xi) P(\xi)\sigma(x,y,\xi)
        \right]&\leq a(\xi)x^{\top}P(\xi)x+a(\xi)y^{\top}P(\xi)y,
    \end{align*}
holds for all $x,y\in\mathbb{R}^n$ and all $\xi\in\Omega$.
}
\end{enumerate}

The following result is a direct consequence of Theorem \ref{LST}.
\begin{theorem}\label{thmnn1}
Suppose that Hypotheses $\mathbf H_{2}$ and $\mathbf H_{3}$ hold and $\sup_{\xi\in\Omega}\mu(\xi)\le\mu_{0}$ for some positive $\mu_{0}$. If there exist measurable positive definite matrix-valued function $R(\cdot)$, constant diagonal positive definite matrix $Z=\text{diag}[Z_{1},\cdots,Z_{n}]$, constant positive definite matrix $Q$,
non-random positive $\nu(t)\in C^1([-\tau_b,+\infty))$ and positive constants $\alpha_\nu,\beta_\nu$ satisfying
\begin{align*}
   \lim_{t\rightarrow+\infty}\nu(t)=+\infty,
        \quad \sup_{t\geq0}\frac{\nu^{\prime}(t)}{\nu(t)}\leq\alpha_{\nu},
   \quad \inf_{t\geq0}(1-\tau^{\prime}(t))\frac{\nu(t-\tau(t))}{\nu(t)}\geq\beta_{\nu},
\end{align*}
such that
\begin{align}
\Pi^k=\left[\begin{array}{lll}\Sigma^{k}&P(\xi^{k})A(\xi^{k})&P(\xi^{k})B(\xi^{k})\\
*&\Lambda^{k}&ZB(\xi^{k})\\
*&*&\Gamma^{k}\end{array}\right]\leq 0\label{pi}
\end{align}
holds, where
\begin{align*}
\Sigma^{k}=&\alpha_{\nu}\left(P(\xi^k)+ZG\right)-2P(\xi^{k})D(\xi^{k})
+R(\xi^{k})\\
&+\mu(\xi^{k})\bigg\{\E[P(\xi^{k+1})|\F_{k}]-P(\xi^{k})\bigg\},\\
\Lambda^{k}=&-2ZD(\xi^{k})G^{-1}+ZA(\xi^{k})+A^{\top}(\xi^{k})Z-G^{-1}R(\xi^{k})G^{-1}\\
&+a(\xi^{k})P(\xi^{k})+Q+\lambda_{\max}(ZG)E(\xi^k),\\
\Gamma^{k}=&-\beta_\nu Q+a(\xi^k)P(\xi^k)+\lambda_{\max}(ZG)F(\xi^k),\\
G=&\text{diag}[G_{1},\cdots,G_{n}],
\end{align*}
then the system (\ref{nn}) is $\nu$-stable in probability.
\end{theorem}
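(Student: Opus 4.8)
The plan is to deduce Theorem~\ref{thmnn1} from Theorem~\ref{LST} using an explicit Lyapunov functional of the form~(\ref{V}) tailored to~(\ref{nn}). I would take
\begin{align*}
V(x_t,t,r(t))=\nu(t)\Big[x(t)^{\top}P(r(t))x(t)+2\sum_{i=1}^{n}Z_{i}\int_{0}^{x_{i}(t)}g_{i}(s)\,\mathrm{d}s\Big]+\int_{t-\tau(t)}^{t}\nu(\theta)\,g(x(\theta))^{\top}Q\,g(x(\theta))\,\mathrm{d}\theta,
\end{align*}
so that, in the notation of~(\ref{V}), $P(x,t,\xi)=\nu(t)[x^{\top}P(\xi)x+2\sum_iZ_i\int_0^{x_i}g_i(s)\,\mathrm{d}s]$ and $Q(x,\theta)=\nu(\theta)g(x)^{\top}Qg(x)$. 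Since $g_i(0)=0$ and $0\le g_i'\le G_i$ we have $0\le\int_0^{x_i}g_i(s)\,\mathrm{d}s\le G_ix_i^2/2$; together with $C_1I_n\le P(\xi)\le C_2I_n$ this gives (i) $V\ge\nu(t)C_1|x(t)|^2$, so condition~1 of Theorem~\ref{LST} holds with $\omega(x)=C_1|x|^2$, and the sandwich $c_1|x|^2\le\omega(x)\le c_2|x|^2$ of its conclusion~3 holds with $c_1=c_2=C_1$; (ii) the $\xi$-uniform bound $P(x,t,\xi)\le\nu(t)(C_2|x|^2+x^{\top}ZGx)$ required for $V\in D(\hat{\mathcal A})$ in Lemma~\ref{le3}; and (iii) condition~3, $\E V(\phi,0,r(0))<\infty$, from boundedness of $\phi$, $P(\cdot)$, $Q$ and continuity of $\nu$ on $[-\tau_b,0]$. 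The $C^1$/$C^2$ regularity of $P(x,t,\cdot)$ is clear since $g\in C^1$ and $\nu\in C^1$.

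The core is condition~2, $\hat{\mathcal A}V\le0$. I would compute $\hat{\mathcal A}V$ from Lemma~\ref{le3} with the drift and the diffusion coefficient $\sigma$ of~(\ref{nn}), and then majorize each group of terms so that, writing $\zeta=[x(t)^{\top},g(x(t))^{\top},g(x(t-\tau(t)))^{\top}]^{\top}$, one obtains $\hat{\mathcal A}V\le\nu(t)\zeta^{\top}\Pi^{k}\zeta$, which is $\le0$ by~(\ref{pi}). Schematically: $\partial_t P$ gives the $\alpha_\nu(P(\xi^k)+ZG)$ part of $\Sigma^k$ using $\sup\nu'/\nu\le\alpha_\nu$ and $2\sum_iZ_i\int_0^{x_i}g_i\le x^{\top}ZGx$; $\partial_x P\cdot f$ yields, after symmetrization, the $-2P(\xi^k)D(\xi^k)$ of $\Sigma^k$, the off-diagonal blocks $P(\xi^k)A(\xi^k)$, $P(\xi^k)B(\xi^k)$, $ZB(\xi^k)$, the $ZA(\xi^k)+A^{\top}(\xi^k)Z$ of $\Lambda^k$, and---via the sector inequality $x_ig_i(x_i)\ge g_i(x_i)^2/G_i$---the $-2ZD(\xi^k)G^{-1}$ of $\Lambda^k$; the It\^o second-order term splits into $\nu(t)\,\mathrm{tr}[\sigma^{\top}P(\xi^k)\sigma]$ (bounded by $\mathbf H_3$(3)) and a remainder whose Hessian $Z\,\mathrm{diag}[g_i'(x_i)]\preceq ZG$ lets one pull out $\lambda_{\max}(ZG)$ and apply $\mathbf H_3$(2), together supplying the $a(\xi^k)P(\xi^k)$, $\lambda_{\max}(ZG)E(\xi^k)$ and $\lambda_{\max}(ZG)F(\xi^k)$ entries of $\Lambda^k$ and $\Gamma^k$; the switching term of Lemma~\ref{le3} is exactly $\nu(t)\mu(\xi^k)x^{\top}(\E[P(\xi^{k+1})|\F_k]-P(\xi^k))x$ because the primitive is $\xi$-independent; the delay boundary term $Q(x(t),t)-(1-\tau'(t))Q(x(t-\tau(t)),t-\tau(t))$ yields the $Q$ entry of $\Lambda^k$ and, via $\inf(1-\tau')\nu(t-\tau)/\nu\ge\beta_\nu$, the $-\beta_\nu Q$ of $\Gamma^k$; and finally adding and subtracting the slack $\nu(t)[x^{\top}R(\xi^k)x-g(x(t))^{\top}G^{-1}R(\xi^k)G^{-1}g(x(t))]$, nonnegative by the componentwise sector bound, installs the $R(\xi^k)$ and $-G^{-1}R(\xi^k)G^{-1}$ entries. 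Collecting and factoring $\nu(t)$ gives $\hat{\mathcal A}V\le\nu(t)\zeta^{\top}\Pi^k\zeta\le0$.

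With conditions~1--3 of Theorem~\ref{LST} verified, its conclusion~3 (applicable since $c_1=c_2=C_1$) yields that~(\ref{nn}) is $\nu$-mean-square stable and $\nu$-stable in probability, which contains the assertion. I expect the only delicate step to be the term-by-term bookkeeping in the second paragraph: keeping every quadratic form expressed consistently through the single vector $\zeta$, symmetrizing the non-symmetric products $P(\xi^k)D(\xi^k)$, $ZD(\xi^k)G^{-1}$, $ZA(\xi^k)$, and arranging that the activation-Hessian contribution (the $Z\,\mathrm{diag}[g_i']\preceq ZG$ step followed by $\mathbf H_3$(2)) and the sector/$R(\xi^k)$ slack land in precisely the blocks of $\Sigma^k,\Lambda^k,\Gamma^k$; once that matching is set up, the result follows mechanically from Lemma~\ref{le3} and Theorem~\ref{LST}.
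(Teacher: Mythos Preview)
Your proposal is correct and follows essentially the same route as the paper: the same Lyapunov functional $V_1$, the same sector/slack inequalities (in particular the $R(\xi^k)$ add--subtract and the $Z\,\mathrm{diag}[g_i']\preceq ZG$ step combined with $\mathbf H_3$), and the same reduction to $\hat{\mathcal A}V\le\nu(t)\zeta^{\top}\Pi^{k}\zeta\le0$ followed by Theorem~\ref{LST}. If anything, you are slightly more explicit than the paper in checking conditions~1 and~3 of Theorem~\ref{LST} and the regularity hypotheses of Lemma~\ref{le3}.
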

\begin{proof}
First, the existence and uniqueness of (\ref{nn}) is ensured by Theorem \ref{ext} since system (\ref{nn}) satisfies Hypothesis $H_{1}$. Consider a candidate Lyapunov functional as follows:
\begin{align*}
   V_1&(x_t,t,r(t))
   =\nu(t)\big[x^{\top}(t)P(r(t))x(t)
   +2\sum_{i=1}^{n}Z_i \int_{0}^{x_i}g_i(\rho)\mathrm{d}\rho\big]
   +\int_{t-\tau(t)}^{t} \nu(s)g^{\top}(x(s))Qg(x(s))\mathrm{d}s.
\end{align*}
Denoting $\sigma_{t}=\sigma(g(x(t)),g(x(t-\tau(t))),r(t))$ and letting $r(t)=\xi^{k}$, the infinitesimal generator on $V_{1}(x_{t},t,r(t))$ can be given by
\begin{align*}
   \hat{\mathcal{A}}&V_1(x_t,t,r(t))
   =\nu^{\prime}(t)\big[
       x^{\top}(t)P(\xi^k)x(t)+2\sum_{i=1}^nZ_i
       \int_0^{x_i}g_i(\rho)\mathrm{d}\rho\big]
   \\
   &+2\nu(t)[x^{\top}(t)P(\xi^k)+g^{\top}(x(t))Z]
   [-D(\xi^k)x(t)+A(\xi^k)g(x(t))+B(\xi^k)g(x(t-\tau(t)))]\\
   &+\nu(t)\text{tr}\left(\sigma_t^{\top}[P(\xi^k)+ZG^{\prime}(x(t))]\sigma_t\right)
   +\nu(t)x^{\top}(t)\mu(\xi^k)
\{\mathbb{E}[P(\xi^{k+1})|\mathcal{F}_k]-P(\xi^k)\}x(t)\\
   &+\nu(t)g^{\top}(x(t))Qg(x(t))
   -(1-\tau^{\prime}(t))\nu(t-\tau(t))g^{\top}(x(t-\tau(t)))Qg(x(t-\tau(t)))\\
   &+\nu(t)x^{\top}(t)R(\xi^k)x(t) -\nu(t) x^{\top}(t)R(\xi^k)x(t).
\end{align*}

Under Hypothesis $\mathbf H_{2}$, it can be seen that (\romannumeral1) $\int_{0}^{x_{i}}g_{i}(\rho)d\rho\le\frac{1}{2}G_{i}x_{i}^{2}$, (\romannumeral2)
$g_{i}(x_{i})Z_{i}D_{i}x_{i}\ge g_{i}(x_{i})Z_{i}D_{i}G_{i}^{-1}g_{i}(x_{i})$,   (\romannumeral3) $x^{\top}(t)R(\xi^{k})x(t)\ge g^{\top}(x(t))G^{-1}R(\xi^{k})G^{-1}g(x(t))$, and in terms of Hypothesis $\mathbf H_{3}$, we have
\begin{align*}
\text{tr}&[\sigma_{t}^{\top}P(\xi^{k})\sigma_{t}]
\le a(\xi^{k})\Big[g^{\top}(x(t))P(\xi^{k})g(x(t))+g^{\top}(x(t-\tau(t)))P(\xi^{k})g(x(t-\tau(t)))\Big],\\
\text{tr}&[\sigma_{t}^{\top}ZG^{\prime}(x(t))\sigma_{t}]
\le \lambda_{\max}(ZG)\Big[g^{\top}(x(t))E(\xi^k) g(x(t))
+g^{\top}(x(t-\tau(t)))F(\xi^k)g(x(t-\tau(t)))\Big].
\end{align*}
Since $\sup_{t\geq0}\frac{\nu^{\prime}(t)}{\nu(t)}\leq\alpha_{\nu}$
and $\inf_{t\geq0}(1-\tau^{\prime}(t))\frac{\nu(t-\tau(t))}{\nu(t)}\geq\beta_{\nu}$,
\begin{align*}
\A V_{1}(x_{t},t,r(t))\le&\nu(t)\left[x^{\top}(t),g^{\top}(x(t)),g^{\top}(x(t-\tau(t)))\right]
\Pi^{k}\left[\begin{array}{l}
x(t)\\g(x(t))\\g(x(t-\tau(t)))\end{array}\right].
\end{align*}

Noting $\Pi^{k}\leq 0$, we have $\A V_{1}(x_{t},t,r(t))\le 0$. The rest of the proof follows directly from Theorem \ref{LST}.
\end{proof}

\begin{remark}\label{remark:3}
   It is straightforward to derive the $\nu$-mean-square stability, and the proof is omitted.
\end{remark}

The following result is obtained from Theorem \ref{Hal}.
\begin{theorem}\label{thmnn2} Suppose that Hypotheses $\mathbf H_{2}$ and $\mathbf H_{3}$ hold and $\sup_{\xi\in\Omega}\mu(\xi)\le\mu_{0}$ for some positive $\mu_{0}$. If
there exist measurable positive definite diagonal matrix-valued function $W(\cdot)$ and $V(\cdot)$,
and non-random positive $\nu(t)\in C^1([\tau_b,+\infty))$ and positive constants
$\alpha_{\nu},\beta_{\nu}$ satisfying
\begin{align*}
\lim_{t\rightarrow+\infty}\nu&(t)=+\infty,
   \quad \sup_{t\geq0}\frac{\nu^{\prime}(t)}{\nu(t)}\leq\alpha_{\nu}, \quad
   \sup_{t\geq0}\frac{\nu(t)}{\nu(t-\tau(t))}\leq\beta_{\nu},
\end{align*}
and positive constants $\rho_{1}$, $\kappa'$ and $\kappa$ $(\kappa>\kappa')$ such that $P(\xi)\le \rho_{1}P(\xi')$ holds
for all $\xi,\xi'\in\Omega$, and $M_{k}\le-\kappa P(\xi^{k})$, $N_{k}\le(\kappa'/\rho_{1})P(\xi^{k})$ hold, where
\begin{align*}
M_{k}=&\alpha_{\nu} P(\xi^{k})-2P(\xi^{k})D(\xi^{k})+a(\xi^{k})\|P(\xi^{k})\|_{2}G^2\\
&+P(\xi^{k})A(\xi^{k})V(\xi^k)^{-1}A^{\top}(\xi^{k})P^{\top}(\xi^{k})+GV(\xi^{k})G\\
&+P(\xi^{k})B(\xi^{k})W(\xi^{k})^{-1}B^{\top}(\xi^{k})P^{\top}(\xi^{k})\\
&+\mu(\xi^{k})\bigg\{\E[P(\xi^{k+1})|\F_{k}]-P(\xi^{k})\bigg\},\\
N_{k}=&\beta_{\nu}[GW(\xi^{k})G+a(\xi^{k})\|P(\xi^{k})\|_{2}G^{2}],\\
G=&\text{diag}[G_{1},\cdots,G_{n}],
\end{align*}
where the matrix norm $\Vert\cdot\Vert_2$ is induced by vector Euclidean norm,
then the system (\ref{nn}) is $\nu$-stable in probability.
\end{theorem}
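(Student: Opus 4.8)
The plan is to apply Theorem \ref{Hal} with a Lyapunov functional analogous to the one in Theorem \ref{thmnn1} but \emph{without} the absolutely continuous integral term carrying all the delay information, so that the delay shows up through the functional $\psi$ in condition 4) rather than through a $-(1-\tau'(t))Q(\cdot)$ term. Concretely I would take
\begin{align*}
V_2(x_t,t,r(t)) = \nu(t)\,x^{\top}(t)P(r(t))x(t),
\end{align*}
and set $\omega(x) = C_1|x|^2$ (so that condition 1) of Theorem \ref{Hal} holds via $C_1 I_n \le P(\xi)$), and $\psi(x_t,t,r(t)) = \nu(t)\,g^{\top}(x(t))W(r(t))g(x(t))$ up to the bounding constants — the exact form of $\psi$ must be chosen so that condition 4) of Theorem \ref{Hal} is satisfied, which is where the hypothesis $\sup_t \nu(t)/\nu(t-\tau(t)) \le \beta_\nu$ and $P(\xi) \le \rho_1 P(\xi')$ enter. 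First I would invoke Theorem \ref{ext} to get existence and uniqueness (system (\ref{nn}) satisfies $\mathbf H_1$ under $\mathbf H_2$, $\mathbf H_3$), then compute $\hat{\mathcal A}V_2$ using Lemma \ref{le3}: this produces the drift terms $\nu'(t)x^\top Px$, the cross terms $2\nu(t)x^\top P(-Dx + Ag(x) + Bg(x(t-\tau)))$, the diffusion term $\nu(t)\,\mathrm{tr}[\sigma_t^\top P \sigma_t]$, and the switching term $\nu(t)x^\top \mu(\xi^k)\{\mathbb E[P(\xi^{k+1})|\mathcal F_k] - P(\xi^k)\}x$.

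The core estimate is to bound $\hat{\mathcal A}V_2$ in the form $\le -\alpha(t)V_2 + \beta(t)\psi + J(t)$. Here I would use the elementary inequality $2x^\top P A g(x) \le x^\top PAV(\xi)^{-1}A^\top P^\top x + g^\top(x) V(\xi) g(x)$ (and similarly for the $B$-term with $W(\xi)$) to split the cross terms; the $GV(\xi)G$ and $GW(\xi)G$ blocks arise because $|g_i(x_i)| \le G_i|x_i|$, so $g^\top V g \le$ a quadratic in $x$ at the current time, while $g^\top(x(t-\tau))W g(x(t-\tau))$ is pushed into $\psi$. The diffusion term is handled by $\mathbf H_3$(3) — $\mathrm{tr}[\sigma_t^\top P \sigma_t] \le a(\xi)\,g^\top(x(t))P g(x(t)) + a(\xi)\,g^\top(x(t-\tau))P g(x(t-\tau))$ — combined with $\mathbf H_2$, giving the $a(\xi^k)\|P(\xi^k)\|_2 G^2$ contributions at both time points. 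Collecting the current-time quadratic yields exactly $x^\top(t) M_k x(t)$ after using $\sup_t \nu'(t)/\nu(t) \le \alpha_\nu$; the condition $M_k \le -\kappa P(\xi^k)$ then gives this piece $\le -\kappa\, x^\top(t)P(\xi^k)x(t) = -\kappa V_2/\nu(t)$, hence one reads off $\alpha(t)$ on the order of $\kappa$ (after absorbing the $\nu$-ratio into $\alpha_\nu$). The delayed quadratic is bounded by $x^\top(t-\tau)N_k x(t-\tau)$ with $N_k \le (\kappa'/\rho_1)P(\xi^k)$, and then $P(\xi^k) \le \rho_1 P(r(t-\tau))$ plus $\sup_t \nu(t)/\nu(t-\tau(t)) \le \beta_\nu$ converts this into $\le \kappa' \beta_\nu$ times $\nu(t-\tau)\,x^\top(t-\tau)P(r(t-\tau))x(t-\tau)/\nu(t-\tau) \cdot (\text{ratio})$, i.e. a bound by $\beta(t)\sup_{t-\tau(t)\le\theta\le t}V_2(\theta)$-type quantity compatible with condition 4).

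After these reductions, condition 3) of Theorem \ref{Hal} — $\alpha(t) - \beta(t) \ge \eta$ for some $\eta > 0$ — follows from $\kappa > \kappa'$ (the strict inequality is precisely what provides the spectral gap), possibly after rescaling $\nu$ so that the $\alpha_\nu,\beta_\nu$ constants are absorbed cleanly; $J(t) \equiv 0$ here since (\ref{nn}) is homogeneous, so condition 2) holds trivially with $J_0 = J_1 = 0$. Condition 5) holds because $\phi$ is bounded and $P$ is bounded above by $C_2 I_n$, and condition 1) as noted. Then Theorem \ref{Hal} delivers $\lim_{t\to\infty}\mathbb E[\omega(x(t))] = 0$ and, since $c_1|x|^2 \le \omega(x) \le c_2|x|^2$ with $c_1 = C_1$, $c_2 = C_2$, the $\nu$-stability in probability (and $\nu$-mean-square stability). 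I expect the main obstacle to be verifying condition 4) of Theorem \ref{Hal} rigorously — i.e., choosing $\psi$ and the constant in front so that $\mathbb E[\psi(y_t,t,r(t))|\mathcal H_s] \le \sup_{t-\tau(t)\le\theta\le t}\mathbb E[V_2(y_\theta,\theta,r(\theta))|\mathcal H_s]$ holds pathwise in the switching, given that $r(\theta)$ may differ from $r(t)$ over the delay window; this requires carefully using $P(\xi)\le\rho_1 P(\xi')$ to dominate $W(r(t))$-weighted quadratics by $P(r(\theta))$-weighted ones uniformly, and the $\nu$-monotonicity-type bound to control the $\nu$ prefactors. The remaining work is the (routine but lengthy) matrix bookkeeping assembling $M_k$ and $N_k$.
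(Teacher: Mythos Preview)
Your proposal is correct and follows essentially the same route as the paper: the same Lyapunov functional $V_2=\nu(t)\,x^{\top}(t)P(r(t))x(t)$, the same Young-type splittings $2x^{\top}PAg\le x^{\top}PAV^{-1}A^{\top}P^{\top}x+g^{\top}Vg$ (and likewise for $B$ with $W$), the same use of $\mathbf H_3$(3) on the diffusion term, and the same appeal to Theorem~\ref{Hal} with $\alpha\equiv\kappa$, $\beta\equiv\kappa'$, $J\equiv 0$. Two small clean-ups: the factor $\beta_\nu$ is already built into the definition of $N_k$, so the passage $\nu(t)\to\nu(t-\tau(t))$ occurs \emph{before} you invoke $N_k\le(\kappa'/\rho_1)P(\xi^k)$ (no extra $\beta_\nu$ appears afterward), and the natural choice of $\psi$ is simply $\psi(y_t,t,r(t))=V_2(y_{t-\tau(t)},t-\tau(t),r(t-\tau(t)))$, which makes condition 4) of Theorem~\ref{Hal} a tautology and dissolves the obstacle you flagged.
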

\begin{proof}
The existence and uniqueness of the solution of the system (\ref{nn}) is also guaranteed by Theorem \ref{ext}. Consider the auxiliary functional 
$V_{2}(x_t,r(t),t)=\nu(t)x^{\top}(t)P(r(t))x(t)$, where $r(\theta)=r(0)$ for $-\tau_b\leq\theta <0$.
Letting $r(t)=\xi^{k}$ and denoting $\sigma_{t}=\sigma(g(x(t)),g(x(t-\tau(t))),r(t))$, we obtain the infinitesimal generator of $V_{2}(x_t,r(t),t)$ for $t\geq 0$ as follows:
\begin{align*}
   &\hat{\mathcal{A}}V_2(x_t,r(t),t)\leq\nu(t)
       x^{\top}(t)\Big[
           \frac{\nu^{\prime}(t)}{\nu(t)}P(\xi^k)-2P(\xi^k)D(\xi^k)\\
           &+\mu(\xi^k)
           \left\{
               \mathbb{E}[P(\xi^{k+1})|\mathcal{F}_k]-P(\xi^k)
           \right\}
           +a(\xi^k)\Vert P(\xi^k)\Vert_2G^2
       \Big]x(t)
   \\
   &+\nu(t)a(\xi^k)\Vert P(\xi^k)\Vert_2x^{\top}(t-\tau(t))G^2x(t-\tau(t))
   +2\nu(t)x^{\top}(t)\\
   &\times \Big[
   P(\xi^k)A(\xi^k)g(x(t))
   +P(\xi^k)B(\xi^k)g(x(t-\tau(t)))
   \Big].
\end{align*}

For positive definite diagonal matrix $V(\xi^{k})$, it follows that
\begin{align*}
&2x^{\top}(t)\left[P(\xi^{k})A(\xi^{k})\right]g(x(t))\\
=&2x^{\top}(t)\left[P(\xi^{k})A(\xi^{k})V(\xi^{k})^{-1/2}V(\xi^{k})^{1/2}\right]g(x(t))\\
\le& x^{\top}(t)\left[P(\xi^{k})A(\xi^{k})V(\xi^{k})^{-1}A^{\top}(\xi^{k})P^{\top}(\xi^{k})\right]x(t)
+g^{\top}(x(t))V(\xi^{k})g(x(t))\\
\le& x^{\top}(t)\Big[P(\xi^{k})A(\xi^{k})V(\xi^{k})^{-1}A^{\top}(\xi^{k})P^{\top}(\xi^{k})
+GV(\xi^{k})G\Big]x(t).
\end{align*}

In the same manner, for the positive definite diagonal matrix $W(\xi^{k})$, we have
\begin{align*}
&2x^{\top}(t)\left[P(\xi^{k})B(\xi^{k})\right]g(x(t-\tau))\\
\le& x^{\top}(t)\left[P(\xi^{k})B(\xi^{k})W(\xi^{k})^{-1}B^{\top}(\xi^{k})P^{\top}(\xi^{k})\right]x(t)\\
&+x^{\top}(t-\tau(t))GW(\xi^{k})Gx(t-\tau(t)).
\end{align*}
Since $
\sup_{t\geq0}\frac{\nu^{\prime}(t)}{\nu(t)}\leq\alpha_{\nu}$ and
$\sup_{t\geq0}\frac{\nu(t)}{\nu(t-\tau(t))}\leq\beta_{\nu}$, we have
\begin{align*}
&\hat{\mathcal{A}}V_2(x_t,r(t),t)\\
\leq& \nu(t)x^{\top}(t) M_k x(t)
+\nu(t-\tau(t)) x^{\top}(t-\tau(t)) N_k x(t-\tau(t))\\
\leq&
\frac{\kappa^{\prime}}{\rho_1}\nu(t-\tau(t)) x^{\top}(t-\tau(t))P(\xi^k) x(t-\tau(t))-\kappa V_2(x_t,r(t),t)\\
\leq& \kappa^{\prime} V_2(x_{t-\tau(t)},r(t-\tau(t)),t-\tau(t))-\kappa V_2(x_t,r(t),t).
\end{align*}
Therefore, the proof of Theorem \ref{thmnn2} can be easily accomplished from Theorem \ref{Hal}.
\end{proof}

\begin{remark}\label{remark:4}
From the proof of Theorems \ref{thmnn1} and \ref{thmnn2},
it is evident to see that in the case of bounded delay, we can choose appropriate $\alpha>0$ and $\nu(t)=\exp(\alpha t)$;
in the case of unbounded delay, when $|\tau^{\prime}(t)|\leq u$, $0<u<1$, we can choose appropriate $\alpha>0$ and $\nu(t)=(t+\tau_b+1)^{\alpha}$;
when $\tau(t)\leq t - t/\ln t$ for sufficient large $t$, we can choose $\nu(t)=\ln(t+\tau_b+1)$;
when $\tau(t)\leq t - t^{\alpha}$ for sufficient large $t$, $0<\alpha<1$, we can choose $\nu(t)=\ln\ln(t+\tau_b+3)$.
These results serve as a stochastic version of the $\mu$-stability in \cite{chen2017}.
\end{remark}

\section{Illustrative Examples}\label{sec:examples}

In this section, two illustrative examples are provided to demonstrate the main results derived in this paper.

\begin{figure}[htpb]
   \begin{center}
   \subfigure[Unstable subsystem with r=0.]{\label{fig1}
   \includegraphics[width=0.5\textwidth]{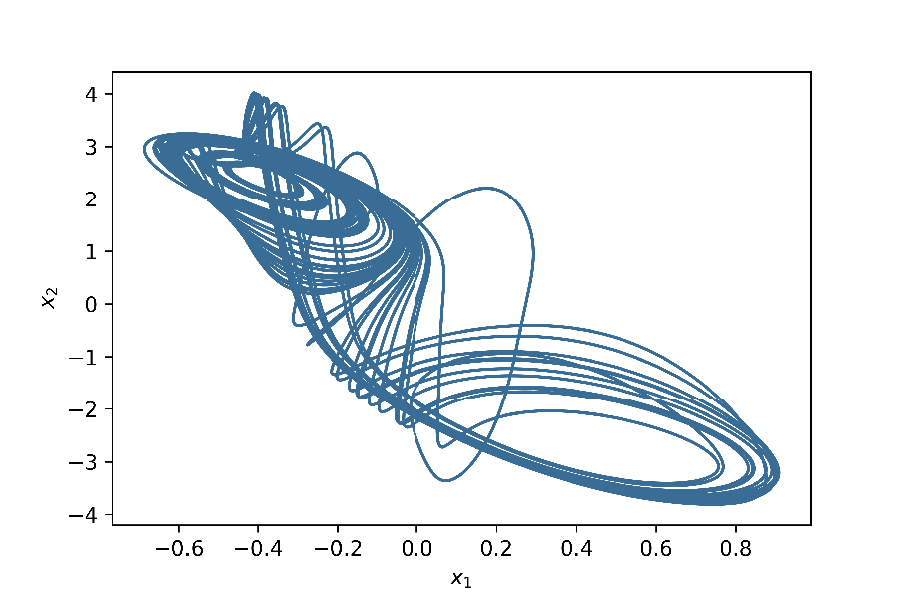}}
   \subfigure[Convergence dynamics with $\tau(t)\equiv 1$.]{\label{fig2}
   \includegraphics[width=0.5\textwidth]{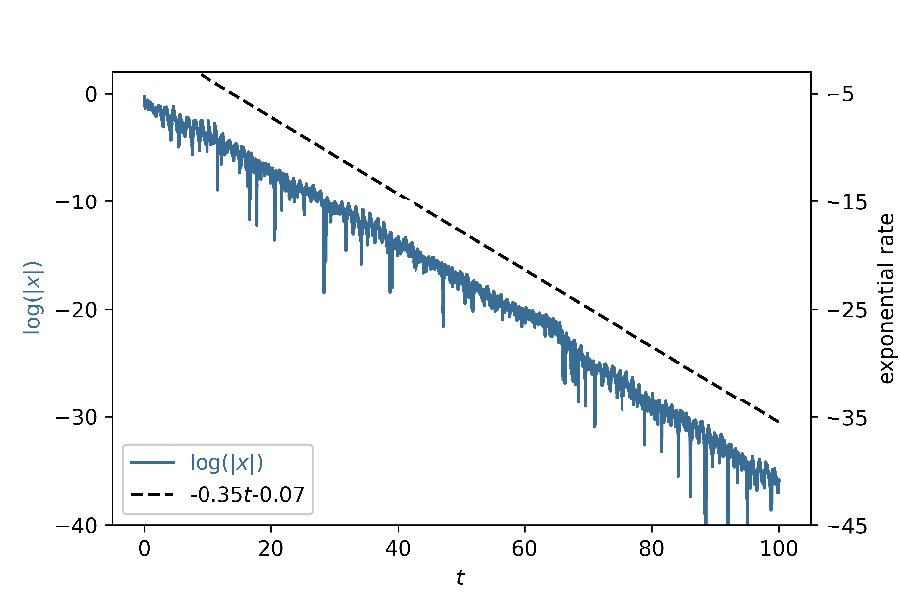}}
   \subfigure[Convergence dynamics with $\tau(t)= 0.1t+1$.]{\label{fig3}
   \includegraphics[width=0.5\textwidth]{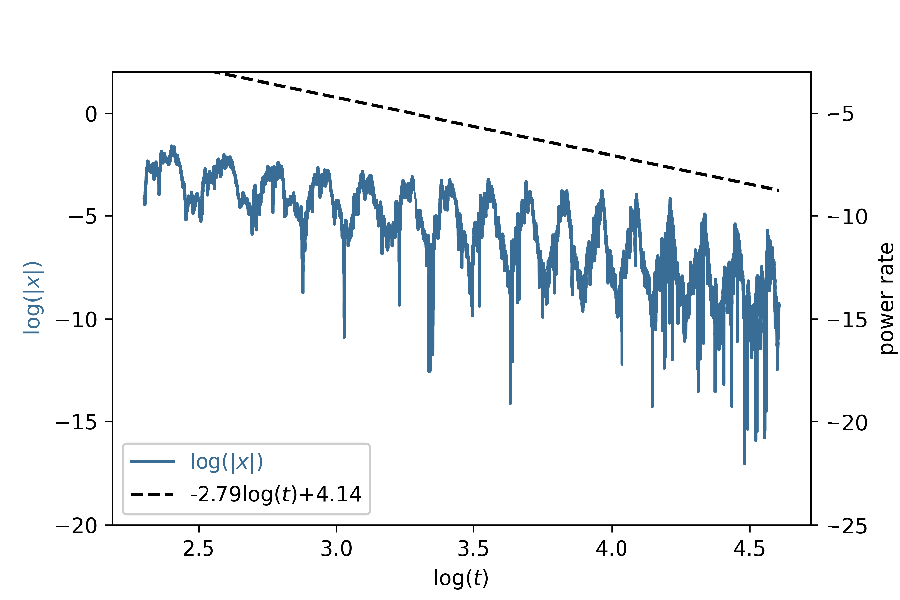}}
   \end{center}
   \caption{
   Panel (a): Attractor dynamic behaviors of the subsystem of (\ref{nn}) with r=0; the curve is plotted by disregarding the initial time interval $[0,500]$ from the whole interval $[0,1000]$.
   Panel (b): Convergent dynamics behaviors of (\ref{nn}) with $\tau(t)\equiv 1$ and initial value $[-0.4,0.6]$;
   the curve is plotted with time interval $[0,100]$.
   Panel (c): Convergent dynamics behaviors of (\ref{nn}) with $\tau(t)= 0.1t+1$ and initial value $[-0.4,0.6]$;
   the curve is plotted by disregarding the initial time interval $[0,10]$ from the whole interval $[0,100]$.
   }
\end{figure}

To begin with, consider a switching delayed networked system (\ref{nn}) consisting of two 2-dimension subsystems. The switching signal $r(t)$ takes values in $\{0,1\}$. The parameters of subsystem associated with $r(t)=0$ are given by
\begin{align*}
B(0)=\left[\begin{array}{cc} -1.5&   -0.1\\
   -0.2&   -2.5\end{array}\right], \; D(0)=I_{2}, \; A(0)=\left[\begin{array}{cc} 2 &  -0.1\\
   -5&    3\end{array}\right],
\end{align*}
with $\sigma(g(x(t)),g(x(t-\tau(t))),0)=g(x(t-\tau(t)))$, and those parameters associated with $r(t)=1$ are set as
\begin{align*}
B(1)=\left[\begin{array}{ll} 3.74&    2.5345\\
   -0.2280  &  5.7981\end{array}\right], \;
D(1)=8.0188I_{2}, \; A(1)=0_{2},
\end{align*}
with $\sigma(g(x(t)),g(x(t-\tau(t))),1)=g(x(t-\tau(t)))$.
The nonlinear output functions $g_{1}(\rho)$ and $g_{2}(\rho)$ are taken as $g_{1}(\rho)=g_{2}(\rho)=\tanh(\rho)$ which satisfy Hypothesis $\mathbf H_{2}$ with $G_{1}=G_{2}=1$. The delay function is chosen as $\tau(t)\equiv 1$.

It has been shown in \cite{Lu} that the subsystem associated with $r(t)=0$ has a chaotic attractor if the noisy term is removed. However, it is shown in Fig. \ref{fig1} that this subsystem is unstable at the origin. By contrast, the subsystem associated with $r(t)=1$ can be validated to be asymptotically stable at the origin by applying Theorem 4 in the non-random case. In the simulation, the solution of (\ref{nn}) is numerically plotted by the Euler method with a time step $0.001$.

The switching signal $r(t)$ is constructed by a switching time sequence following a mixed Poisson process with $\mu(0)=50$ and $\mu(1)=1$ and a discrete-time process $\xi^{k}$. The discrete-time process $\xi^{k}$ is described as follows. Let $X_{k}$ be an identical independent process taking values in $\{-1,1\}$ with equal probability. Setting $S_{n}=\sum_{k=1}^{n}X_{k}$ and $Y_{n}=\max_{k\le n}S_{k}$, then $\xi_{k}$ can be defined by $\xi_{k}=0$ ($\xi^{k}=1$) if $Y_{k}=S_{k}$ ($Y_{k}>S_{k}$). It can be seen that $\xi_{k}$ is not a Markov chain since it depends on all history states of $X_{k}$.

In order to solve the conditions in Theorem \ref{thmnn1}, we set $\nu(t)=\exp(0.01*t)$, and thus $\alpha_\nu=0.01$ and $\beta_\nu=\exp(-0.01)$.
Moreover, we impose $P(0)\ge P(1)$. The term $\chi_{k}=\mu(\xi^{k})\{\E[P(\xi^{k+1})|\F_{k}]-P(\xi^{k})\}$ where the non-Markovian switching $r(t)$ defined above can be estimated by $P(0)\ge P(1)$. Specifically, for $\xi_{k}=0$, i.e., $S_{k}=\max_{l\le k}S_{l}$, 1) if $X_{k+1}=-1$ with probability $1/2$, we have $S_{k+1}<S_{k}=Y_{k}=Y_{k+1}$, which implies $\xi_{k+1}=1$; and 2) if $X_{k+1}=1$ with probability $1/2$, we have $S_{k+1}=Y_{k+1}$, which implies $\xi_{k+1}=0$. Hence, in the case of $\xi_{k}=0$, it can be obtained that $\chi_{k}=(1/2)\mu(0)[P(1)-P(0)]$. For the case of $\xi_{k}=1$, it is seen that if $X_{k+1}=-1$ with probability $1/2$, then $S_{k+1}<S_{k}<Y_{k}=Y_{k+1}$, which implies $\xi_{k+1}=1$. However, the value of $\xi_{k+1}$ can't be obtained with probability $1/2$. Fortunately, since $P(0)\ge P(1)$ is imposed, we can obtain $\chi_{k}\le (1/2)\mu(1)[P(0)-P(1)]$, which can be used to verify the matrix inequality condition in the presented theorems.

By the LMI toolbox in Matlab, we obtain
\begin{align*}
   Z=&\text{diag}[3.2867,3.2867],\;
   R(0)=\text{diag}[174.1615,174.1615],\\
   R(1)=&\text{diag}[32.1933,32.1933],\;
   Q=\left[\begin{array}{ll}  44.7951 &3.4509\\
      3.4509 &62.8182\end{array}\right],\\
   P(0)=&\left[\begin{array}{ll}  14.3049 &-0.0796\\
      -0.0796 &15.7607\end{array}\right],\;
   P(1)=\left[\begin{array}{ll}  5.1113 &0.5240\\
      0.5240 &3.5149\end{array}\right].
\end{align*}

The largest eigenvalue of $\Pi$ equals $-0.8913$.
Therefore, the switching delayed networked system (\ref{nn}) converges to the origin and the simulation result is shown in Fig. \ref{fig2}. Moreover, we set $\tau(t)=0.1t+1$.
Without modifying other experimental settings,
let $\nu(t)=(t+2)^{0.01}$,
and thus $\alpha_{\nu}=0.005$ and $\beta_{\nu}=0.89$.
By the same fashion, we obtain
\begin{align*}
   Z=&\text{diag}[19.6882,19.6882],\;
   R(0)=\text{diag}[872.3114,872.3114],\\
   R(1)=&\text{diag}[105.4197, 105.4197],\;
   Q=\left[\begin{array}{ll}  190.2480 &0.5240\\
      0.5240 &294.6908\end{array}\right],\\
   P(0)=&\left[\begin{array}{ll}  57.1906 &-2.2500\\
      -2.2500 &69.8648\end{array}\right],
   P(1)=\left[\begin{array}{ll}  14.5554 &1.0659\\
      1.0659 &12.9256\end{array}\right].
\end{align*}

So, the largest eigenvalue of $\Pi$ equals -1.8069.
Therefore, the switching delayed networked system (\ref{nn}) converges to the origin and the simulation result is shown in Fig. \ref{fig3}.

\section{Conclusions}
In this paper, we have presented a novel model of time-varying delayed switching stochastic systems with the switching signal modeled as a combination of Cox's process and a discrete-time adaptive process. This switching process is rather general as it includes the independent processes, Markovian jump and hidden Markov processes as special cases.
The existence and uniqueness of the solutions to the time-varying delayed system have been proved under the Lipschitz continuous and linear growth conditions. Based on the generalized Dynkin's formula for the right-continuous adaptive process, the Lyapunov functional and Halanay inequality approaches have been developed to analyze the stability issue of stochastically switching systems. As an application, we discussed sufficient conditions for the stability of switching delayed networked system with stochastic perturbations, which can be regarded as a stochastic extension of the delayed neural networks, from both theoretical and numerical ways. We highlight that, under our established conditions, the concerned system could be stable even if some of its subsystems are unstable or the switching is non-Markovian.

\end{document}